\renewcommand*\FXLayoutInline[3]{%
	{\@fxuseface{inline}\ignorespaces[#3 \fxnotename{#1}: #2]}}
\newtheorem{definition}{Definition}
\newtheorem{corollary}{Corollary}
\newtheorem{thm}{Theorem}
\newtheorem{lem}[thm]{Lemma}
\newcounter{subeqn} %
\tikzstyle{vertex}=[circle,fill=black!25,minimum size=20pt,inner sep=0pt]
\tikzstyle{selected vertex} = [vertex, fill=red!24]
\tikzstyle{blue vertex} = [vertex, fill=blue!24]
\tikzstyle{edge} = [draw,thick,-]
\tikzstyle{weight} = [font=\small]
\tikzstyle{selected edge} = [draw,line width=5pt,-,red!50]
\tikzstyle{ignored edge} = [draw,line width=5pt,-,black!20]
\definecolor[ps]{bordercolor}{rgb}{0.06 0.42 0.11}
\definecolor[ps]{greenbegin}{rgb}{0.47 0.94  0.151}
\definecolor[ps]{greenend}{rgb}{0.23 0.65  0.02}
\tikzset{inlinenotestyle/.append style={align=justify}}
\DeclareMathAlphabet{\pazocal}{OMS}{zplm}{m}{n}
\newcommand{\crossout}[1]{%
  \begingroup
  \settowidth{\dimen@}{#1}%
  \setlength{\unitlength}{0.05\dimen@}%
  \settoheight{\dimen@}{#1}%
  \count@=\dimen@
  \divide\count@ by \unitlength
  \begin{picture}(0,0)
  \put(0,0){\line(20,\count@){20}}
  \put(0,\count@){\line(20,-\count@){20}}
  \end{picture}%
  #1%
  \endgroup
}
\journal{}
\begin{document}
\begin{frontmatter}



\title{Dock Assignment and Truck Scheduling Problem; Consideration of Multiple Scenarios with Resource Allocation Constraints}

\author[ITBA,Pred,CRISTAL,LILLE]{Rahimeh Neamatian Monemi}
\author[LGI2A,PSB]{Shahin Gelareh}

\corref{cor1}

\cortext[cor1]{Corresponding author, shahin.gelareh@\{univ-artois.fr;gmail.com\}; r.n.monemi@gmail.com}

\address[ITBA]{IT and Business Analytics Ltd}
\address[Pred]{Sharkey Predictim Globe}
\address[CRISTAL]{CNRS UMR 9189, Centre de Recherche en Informatique, Signal et Automatique de Lille}
\address[LILLE]{Université de Lille, France}
\address[LGI2A]{Département R\&T, IUT de Béthune, Université d'Artois, F-62000 Béthune, France}
\address[PSB]{Paris School of Business, Paris, France}

\begin{abstract}
The notion of ’resource’ plays an important role in the overall efficiency and performance of most cross-docks. The processing time can often be described in terms of the resources allocated to different trucks. Conversely, for a given processing time, different combinations of resources can be prescribed. We study the problem of truck scheduling and dock assignment in the presence of resource constraints. In the absence of a closed-form (or well-defined) linear formulation describing the processing times as a function of resources, expert’ knowledge has been mobilised to enable modelling of the problem as an integer linear model. Two cases are taken into account: In the first one, the expert believes in his/her estimation of the processing time for every truck and only proposes a different combination of resources for his/her estimation, while in the second one the expert proposes a limited number of resource deployment scenarios for serving trucks, each of which has a different combination of resources and different processing times. We propose a novel compact integer programming formulation for the problem, which is particularly designed with an embedded structure that can be exploited in dual decomposition techniques with a remarkably computationally efficient column generation approach in this case. The case in which a scenario with invariant processing time is considered and modelled as a special case of the proposed model. Since a direct application of commercial solvers such as CPLEX to solve instances of this problem is not realistic, we propose a branch-and-price framework and, moreover, several classes of valid inequalities. Our extensive computational experiments confirm that the proposed exact solution framework is very efficient and viable in solving real-size instances of the practice and in a reasonable amount of time.
\end{abstract}

\begin{keyword}
Cross-docking, Resource Allocation, MILP modeling, Dantzig-Wolfe decomposition
\end{keyword}

\end{frontmatter}
%
\section{Introduction}
{}A cross-dock is a hub node in a supply chain network, wherein Less Than Truck Load (LTL) shipments of different origins are unloaded, sorted, reloaded and sent to different destinations, aiming at reducing the transportation cost as a result of exploiting economies of scale through higher utilisation of transporters. While the use of cross-docking operations dates back at least to the 1950s by the US army and later in the 1980s by Wal-Mart, the concept has become much more involved in the presence of cross-dock networks, specially in recent years. This trend is expected to continue in the years to come. Therefore, it is crucial that such facilities perform optimally for the overall supply chain management also to perform closer to optimally and to remain competitive. In doing so, the need for combinations of fundamental/scaffold models and very efficient solution algorithms ‒which can easily be tailored and integrated within the decision support systems‒ is more vital than ever.\\

\black{}In the truck scheduling and dock assignment problem in cross-docks, the resources available at the facility play a crucial role in the overall performance of the system. Yet, given the current state-of-the-art of hardware technologies, incorporating such realistic features causes additional complexity and significant computational performance issues when general-purpose standard solvers are employed. As a result, in the literature, one witnesses a tendency towards developing heuristic approaches to deal with more realistic models (see \cite{gu2007research}, \cite{Boysen2010413}, \cite{van2012cross} and \cite{theophilus2019truck}). However, one of the drawbacks of such heuristic methods is that often they do not deliver any indication of the solution quality. In brief, for finding the optimal solution of instances of realistic size in the literature, we often have to trade inclusion of more practical features of real-life situations (which result in intractable models to solve) in favour of having more computationally-friendly models.\\

In what concerns this study and many other similar practical cases, the operational resources (excluding IT and ICT resources) within a cross-dock are often divided into three major groups: a) The human resources (skilled or unskilled workers), b) Equipment (e.g. conveyors, which depend on the type of operations to be performed within the facility), and c) Transporters (e.g. lift-trucks, etc.). The facility operators exploit their expert’ knowledge to determine the required resources to be deployed and the overall processing time required for every truck. It is clear that fewer resources may lead to a higher processing time beyond a certain limit (not always proportionally). In practice, among all the scenarios corresponding to the possible combinations of such resources, only a few are practically feasible from an expert viewpoint. This viewpoint is often explained by the existence of very complex and informal constraints (including human resource management, subcontractors’ mode of operation and their reluctance towards certain conditions and several other informal reasons), in the sense that it leaves only a few and rather conservative options on the table. From this point of view, it is hard to accept the introduction of many unfamiliar scenarios, which are often perceived as potentially troublesome ones. As such, in this work we also rely on expert’s’ knowledge to input the set of potential combinations of resources to carry out every task (such as serving trucks) and their corresponding estimated processing time. Therefore, the set of possible scenarios is no longer an endogenous part of the problem.\\

Over the past five years, we have carried out a dozen consultancy projects for small to medium size cross-docks. We have two main observations: 1) \textsc{Multiple scenarios with the same processing time}: based on some repeating patterns when dealing with a given client, operators have a highly accurate estimation of the processing time $\Delta$, required to serve a given truck of a given client (even sometimes as a function of the day of the week), in which case they are capable of proposing different \emph{scenarios} or, in other words, some other combinations of resources lead to almost the same processing time $\Delta$. These operators are often dealing with some informal constraints (e.g. human resource aspects, issues concerning subcontractors and some other operational constraints) which push them to remain very conservative, 2) \textsc{Multiple scenarios with varying processing times}: operators are aware of the repeating patterns of dealing with trucks of a given client but at the same time are less conservative (perhaps due to having better control over the entire in-house logistic operation, have made recent investment  in equipment, making them less tight in operational capacity and more ambitious to increase their throughput or for any other reason) and are more tolerant towards experimenting with a limited number of various (sometimes similar) processing times corresponding to the distinct resource allocation scenario. In reality, again, they are only willing to work with a very small set of possible scenarios.
\\

It needs to be emphasised that, in reality, we do not have an infinite number of scenarios with decimal point precision in their processing time. The level of granularity introduces some rather larger time segments with discrete values, so a significant reduction in the cardinality of the set of possible processing times is of help. One issue remains though; every such processing time often corresponds to more than one scenario, causing some level of symmetry in the mathematical programming context.\\


Within a cross-dock (see \autoref{fig::internal}), resources are allocated to each gate, which mainly include lift-trucks and drivers but also some other less technical staff. In a pure transshipment cross-dock (\emph{0-inventory}) such as the one depicted in \autoref{fig::internal}, no stacking and inventory is taking place. However, in practice some of the very modern cross-docks are also equipped with intelligent racks and automated stacking systems.\\


\begin{figure}
        \begin{subfigure}[b]{0.48\textwidth}
                \includegraphics[width=\linewidth]{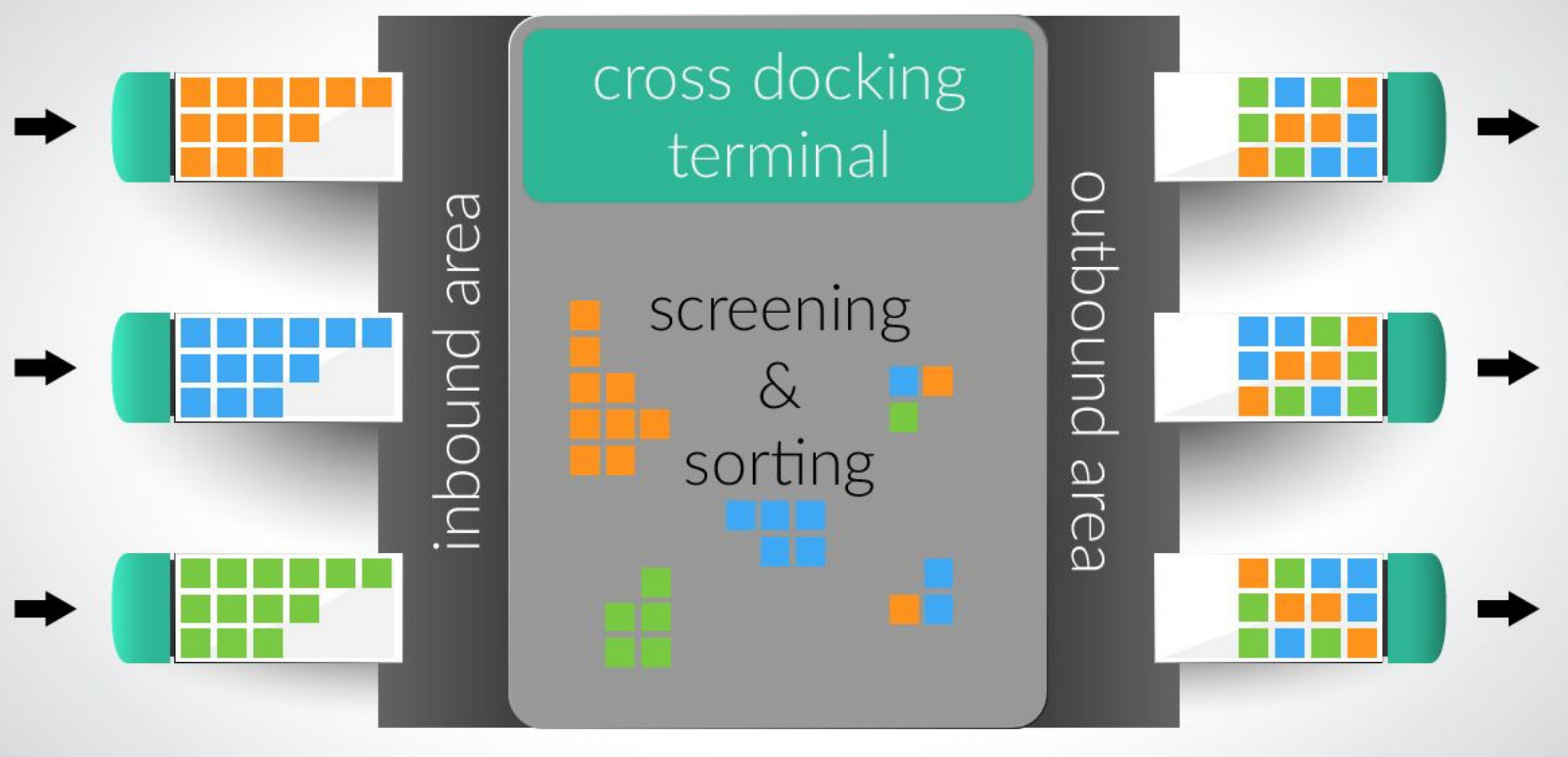}
                \caption{Cross-docking operations \footnote[1]{(source: https://www.odoo.com/)}.}\label{fig::plan} 
        \end{subfigure}%
        \begin{subfigure}[b]{0.48\textwidth}
                \includegraphics[width=\linewidth]{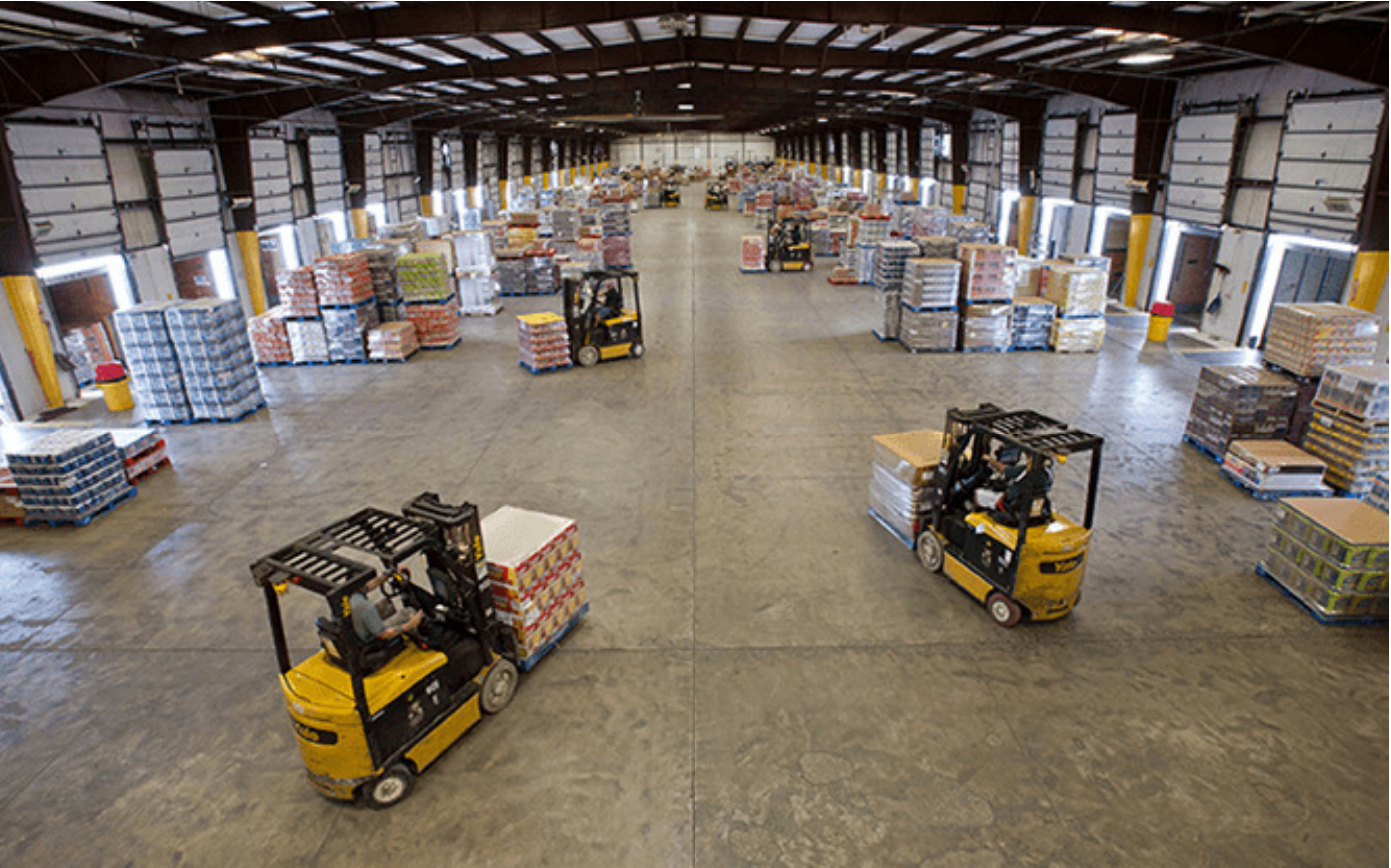}
                \caption{Internal operation of a cross-dock \footnote[2]{(https://www.sandktransport.com)}.}\label{fig::internal} 
        \end{subfigure}%
        \caption{A typical cross-dock.}\label{fig:cross-dock}
\end{figure}

Once truck operations are completed, the loaded trucks head towards different destinations (ports, other cross-docks, distribution to clients, etc.). To be able to make it to the other destinations, trucks often have less flexibility in their departure times, in particular those visiting other consolidation/distribution facilities within the same day or the day after. If their deadlines are not met, they will miss their appointments at the other facilities and this may propagate across the whole or part of the supply chain (even if the entire chain belongs to the same stakeholder). Therefore, in order to maintain the service level, and avoid the domino effect of delay propagation and client dissatisfaction, cross-docks are bound to respect the time window of every truck.\\
\\

This work has been motivated by a series of consultancy projects we have been carrying out for a few years now, in a regional multimodal logistic platform. The type of cross-dock we are dealing with is akin to the one in \autoref{fig::internal}. The pallets unloaded from a truck (or those to be loaded to another truck) do not have to be transferred immediately and can remain in the buffer zone for a few hours during the day. This makes it unnecessary to plan for a direct transfer and for an additional synchronization to ensure the simultaneous presence of unloading and loading trucks at the corresponding gates.

\subsection{Contribution and scope}
In spite all the efforts and advances in (mixed) integer programming formulations proposed for variants of the truck scheduling and dock assignment problem in cross-docks, still solving to optimality anything even near a real-size instance remains a challenge even for the models with the least features of the real practice, let alone incorporating resources and further realistic features. \
In practice, as far as we have learned from working with quite a few mid-size cross-dock operators, the operators work in time slots of a few minutes, say $\tau$ minutes, rather than treating time as a continuous measure. Every working shift is composed of an exact number $\alpha = \frac{8~ hours \times 60~ minutes}{\tau}$ minutes of time slots and for serving every truck an integer quantity of consecutive time slots is allocated.

Our contribution to the literature and practice is manifold: 1) We present a novel problem description from the viewpoint of practitioners. Instead of unnecessarily increasing the search space of feasible solutions  by integrating further excessive combinatorial components into the formulation (concerning resources and the dependency of processing time on the quantity of each resource in every possible combination and thus, having the symmetry issue as a result) or by components with unknown closed-form formulation, we show how expert’ knowledge can help to incorporate a controlled-size combinatorial component into the problem description to accommodate the realistic features. 2) We propose {a} novel integer programming model for the problem. Our proposed model is designed by discretising the time dimension of the problem and embedding decomposable structures aiming at exploiting these structures in designing solution approaches and solving practical size instances of the problem to optimality. We are unaware of any similar/comparable model in the literature.\black{} 3) By exploiting the embedded structure, we propose for each integer programming formulation, a decomposition scheme based on a Dantzig‒Wolfe reformulation and solve this decomposition within a branch-and-price framework. Our extensive computational experiments confirm that the proposed solution approach is indeed very efficient and delivers practice-friendly optimal solutions for the instances in a reasonable time. 4) Constrained by some confidentiality restrictions, rather than relying on a purely random test-bed, we extract the distribution of different events (arrival, departures, service time, etc.) from real data and use a discrete-event simulation model to simulate and generate our test-bed. 5) The model, as it is, can be used to accommodate still further features of the real practice. \black{}6) The quality of our reported optimal solutions has been confirmed mathematically and practically, by achieving feasible solutions resulting in a cost reduction of between $5.3$ and $11.2 \%$ compared to the current practices.
\\

The paper is structured as follows. In section \ref{sec:literature} we review the relevant literature. In section \ref{sec::problemdescription}, a clear description of the problem is presented. In section \ref{sec::model}, we introduce our mathematical model. \black{}Section \ref{sec::solution} elaborates our exact solution approaches and section \ref{sec::numerical} reports some computational results and analysis. Section \ref{sec::conclusion} summarises the work, presents a conclusion and sheds more light on further research directions.

\section{Literature review}\label{sec:literature}

The combined dock assignment and truck scheduling problem, as many other optimization problems arising in cross-dock management, has been receiving an increasing attention during the last decade.
This is evidenced by several recent surveys (\cite{gu2007research}, \cite{Boysen2010413}, \cite{van2012cross}) and more recently, \cite{theophilus2019truck} devoted to the operations management and scheduling in cross-dock.\\

This work belongs to the family of problem descriptions dealing with the assignment of docks to the trucks. Yet, as our cross-docks do include buffer zones, our problem is not a pure transhipment problem and we do not distinguish between inbound and outbound docks. To the best of knowledge, this problem in the way described in this work, has not been visited before in the literature. Yet, the literature is aware of many fundamental models and sophisticated solution methods proposed for problems in this context. In what follows, we only cite a few of those works that are somehow related to our one.\\

 \cite{berghman2011optimal} presented a model for the dock assignment problem, assuming that the trailers need to be assigned to the gates for a given period of time for loading/unloading activities. In this study, the parking lot is used as a buffer zone, while transportation between the parking lot and the gates is performed by tractors owned by the terminal operator. The problem is modeled as a three-stage flexible flow shop problem, where the first and the third stage share the same identical parallel machines (for unloading and loading activities) and the second stage consists of a different set of identical parallel machines (buffer processing activities). They examined several integer programming formulations for the proposed problem.\

Some of the work dealing with door/dock assignment problem with or without scheduling of the truck sequences are elaborated in the following. One observes that those works that deal with solution approach, focus mainly on the meta-heuristic-like approaches as problems become computationally very intractable for any commercial MIP solver.\\

The seminal work of \cite{tsui1992optimal} proposes a bilinear model for the truck dock assignment problem for a shipping company. \cite{lim2005transshipment} studied the transshipment problem with supplier and customer time windows where the flow is constrained by transportation schedules and warehouse capacities. The objective is to minimize the total cost, including the inventory costs. \cite{lim2006truck} proposes a tabu search as well as a CPLEX-based GA for the same problem as in \cite{lim2005transshipment}. \cite{deshpande2007simulating} introduce a dock assignment heuristic. They integrated the tactical level decision-making process and the operational aspects in LTL terminals to evaluate the performance of the system. 
\cite{Miao2009105} propose an integer programming formulation for the truck dock assignment problem in which the capacity of a cross-dock is explicitly taken into account over the planning horizon. Two meta-heuristics based on tabu search and  genetic algorithm are proposed to solve this problem.
\cite{cohen2009trailer} discuss the existing approaches and proposed a formulation and a new heuristic for assigning cross-dock doors to trailers.   \cite{Yu2008377} seek the best truck docking or scheduling sequence for both inbound and outbound trucks to minimize the total operation time when a temporary storage buffer is located to hold items temporarily at the shipping dock. In this work, the product assignment to the trucks and the docking sequences of the inbound and outbound trucks are determined simultaneously. \cite{boloori2011meta} proposes meta-heuristics to find the best sequence of inbound and outbound trucks, so that the objective, which is minimizing the total operation time called makespan, can be satisfied. \cite{arabani2010multi} deal with scheduling problem of inbound and outbound trailers in a cross-docking system according to the just-in-time approach. \cite{liao2012simultaneous} study the problem of determining simultaneously the dock assignment and the sequence of inbound trucks. They consider a multi-door cross-docking operation with the objective to minimize  the total weighted tardiness, under a fixed outbound truck departure schedule. The problem is solved by six different metaheuristic algorithms, which include simulated annealing, tabu search, ant colony optimization, differential evolution, and two hybrid differential-evolution algorithms.
\cite{fathollahi2019novel} consider a truck scheduling problem at cross-docks and propose a so called  Social Engineering Optimizer (SEO). Thye experiment on a  real case study to see how the algorithm performs in a real-life situation. \cite{shahmardan2020truck} takes into account the situation in which trucks can be partially unloaded and loaded at the same time and separating inbound and outbound docks are not required. They formulated the problem as a MIP aiming at finding the dock assignment as well as truck scheduling to minimize the makespan. A heuristic-based solution approach including several sophisticated neighborhood structures, equipped with a learning mechanism is proposed. \cite{dulebenets2021adaptive} proposes a so called Adaptive Polyploid Memetic Algorithm (APMA) for the problem of scheduling trucks. Their main contribution is to propose a solution method, which reports promising results. The work presented in \cite{theophilus2021truck} is concerned with the cold supply chain and perishable goods and cross-docks that deal with this kind of products. They proposed a MIP model for the truck scheduling optimization problem  by incorporating features such as decay of perishable products throughout the service of arriving trucks as well as the presence of temperature-controlled storage areas aiming at optimizing  the total cost incurred during the truck service. They also proposed an evolutionary approach to solve instances of this problem.\\
  \\

Several works tried to come up with new formulations but not necessarily with the aim of overcoming the computational barriers or proposing exact solution methods. \cite{shakeri2008generic} addressed the two problems of truck scheduling and truck-to-door assignment jointly in a mixed integer programming model. \cite{Miao2014} proposed a different model (compared to the one in \cite{Miao2009105}) and an adaptive tabu search algorithm to address the un-capacitated case of the problem.\\

To the best of our knowledge, \cite{GELAREH20161144} is the only work came up with a new formulation and carried out a through polyhedral analysis of the truck-dock assignment problem. Based on those findings, they proposed a new model, identified the dimension of the polytop associated to their model, introduced several classes of valid inequalities, proved that some classes are facet-defining and proposed a branch-and-cut approach pushing up the size of problems that could be solved to optimality. Later in \cite{GELAREH2020102015}, they proposed a set of 11 integer programming formulations and carried out an extensive comparison study on different integer programming models.\\
 	
 \cite{ou2010scheduling} dealt with air cargo and formulated the problem as a time-indexed integer programming problem. They showed that even with a limited number of unloading docks at a terminal, the problem is still quite difficult to solve. They also proposed an exact solution procedure to determine an optimal unloading sequence for the shipments carried by each truck, together with a Lagrangian relaxation-based heuristic for assigning the trucks to the docks and determining the trucks arrival time. Other similar models with application in airport gate assignment have been reported in \cite{babic1984aircraft,ding2004new,ding2005over,oh2006dock} and \cite{lim2005airport}. \\

An approach based  on the discrete-event simulation including an enhanced robust design technique was proposed by \cite{Shi2013695}. The authors addressed a multi-response optimization problem inherent in logistics management. They aimed at designing a robust configuration for a cross-docking distribution center so that the system is insensitive to the disturbances of supply uncertainty and provides steady parts supply to downstream assembly plants.\\

The concept of cross-dock networks has also become more popular in recent years. \\

\cite{agustina2010review} addressed the distribution planning problem of cross-docking networks, considering transshipment possibility among different cross-docks and tardiness permission for some pickups. The problem is formulated as a bi-objective integer programming model
minimizing the total transportation, holding costs and the total tardiness. A heuristic procedure to construct an initial solution and three metaheuristics are proposed. \cite{kim2013gate} considered the problem of minimizing the total cost of a multi-cross-dock distribution network, including transportation cost, inventory handling cost and penalty cost. They proposed an adaptive tabu search and an adaptive genetic algorithm to solve instances of the problem efficiently.\\

\cite{Buijs2014593} proposed a framework specifying the interdependencies between different aspects of the cross-docking problem and a new general classification scheme for the cross-docking research based on the inputs and outputs for each problem aspect. They also highlighted the importance of synchronization in cross-docking networks and described two real-life illustrative problems.\\

\cite{wisittipanich2019truck} presented a MIP model of truck scheduling problem in the cross-docking network comprised of multiple cross-dock and emphasized on the importance of synchronisation within this network. They sought to find the optimal truck schedule and product transshipment aiming at minimizing the makespan. They report promising results. \\

In contrast to many other work in the literature, \cite{tadumadze2019integrated} did not assume a fixed un/loading time and emphasized that in real-world, terminal managers have the additional flexibility of adapting the workforces for processing critical trucks. Their proposed MIP model integrates human resources and truck scheduling. Their computational experiments are based on proposed heuristic approach. \cite{castellucci2021network} focused on a corss-dock network. They propose a MIP model aiming at optimizing the distribution and delay costs for the transportation of goods in open networks with multiple cross-dock. They proposed a  logic-based Benders decomposition strategy which allow for the solution of larger instances when compared with those that can be handled by the direct application of a general-purpose MIP solver.\\

\section{Problem Description}\label{sec::problemdescription}

The problem is described as follows:

\emph{Let { $\mathcal{J} = \{1, . . . , N\}$} represent the set of trucks (to be loaded  or unloaded), $\mathcal{D}=\{1,\dots,D\}$ represent the set of bi-functional dock doors (loading and unloading), and $\mathcal{R}^\dag=\{1,\dots, R^\dag\},~ ^\dag\in\{p,e,v\}$ represent the resources ($p$:personnel, $e$:equipment and $v$:vehicles) within a cross-dock and all are given. Expert beliefs and the historical data reveal that for every truck $j\in\mathcal{J}$, there is a set of resource deployment scenarios $\mathcal{S}^j=\{1,\dots,S\}$, out of which at most one will be adopted to plan serving a given truck. Let us assume that every truck $j\in \mathcal{J}$ has a given arrival time, $r_j$, and a strict latest departure time, $d_j$, a docking (setup) time $\delta_j$ (which is the time required for aligning the truck in front of a gate, the setups required by both the truck and the dock to start the loading/unloading operation) and a processing time $p_{j}^{s}$ (units of time interval) required to load/unload a truck following a resource deployment scenario $s$, and these are all given. In its general form, every truck $j$ may belong to different clients and carries several cargos with a variety of sensitivities translated to a penalty cost, $f_j$, for every unit of waiting time before getting admitted to service. A penalty cost $g_j$ is charged when truck $j$ is not served at all during the planning horizon (on the official planning of the day). We want to minimise: 1) the waiting cost of every single truck such that it can leave the soonest possible, and 2) the weighted penalty cost of missed trucks in the daily planning of the cross-dock. Therefore, the objective function accounts for the sum of penalty costs associated with the waiting times of all trucks before being admitted to be served and a more significant penalty cost associated with the trucks not being served.}

It is assumed that the cross-dock has an overly large capacity, making it practically un-capacitated.

\subsection{Graphical representation of the problem}
A visual representation of our modelling framework as a graph is given in \autoref{fig:graphical}. The modelling is composed of a ’dummy’ truck $0$ as the start and another ’\emph{dummy}’ truck $0$ as the end truck that every dock has to serve.\\

\begin{figure}
  \centering
  \includegraphics[width=0.8\textwidth]{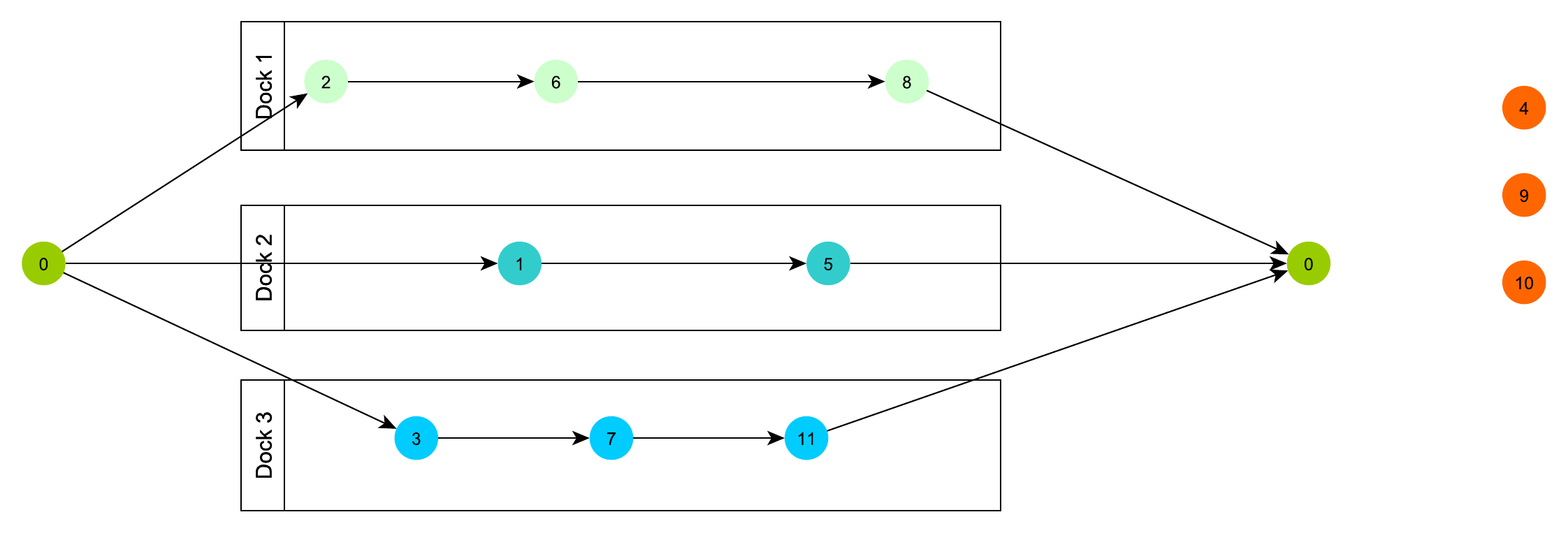}
  \caption{A graphical representation of the modeling approach.}\label{fig:graphical}
\end{figure}
Every truck is served by no more than one dock door and each horizontal box represents a dock door. Each dock door has a list of trucks to serve and the horizontal axis stands for the time axis. The nodes stacked next to the graph (on the right side), namely 4, 9 and 10, are the ones that are not served by any dock door. The total number of arcs in this graph minus the number of dock doors is equal to the number of trucks being served. In addition, while every real truck is preceded by one arc and is followed by another arc, the first and the last nodes (dummy ones) encompass and receive a number of arcs equal to the number of dock doors.\\

If no real truck is being served by a dock door, then there will be an arc connecting the first dummy to the last dummy node for every such idle dock door.  

\section{Mathematical Model}\label{sec::model}
We propose a model for the general case of \emph{Dock Assignment and Truck Scheduling Proble (DATS)} wherein one assumes scenario-dependent service times for serving a truck, and explain how the other case --a scenario-invariant one wherein all identified scenarios for every truck have the same processing time‒- can be derived. Indeed, we adopt an arc-time indexed formulation approach.\\

\subsection {Mathematical Model for the Multiple Scenario DATS Problem with Resource Allocation} \label{subsec:params}

The following parameters are used (see \autoref{tab:params}): For the purpose of modelling, we define a set of trucks, $\mathcal{J}$, a dummy truck '0' and $\mathcal{J}^\circ = \mathcal{J}\cup \{0\}$, while assuming that all docks start the day by \emph{completing} the service to the dummy truck '0' with zero processing time, and terminate it by \emph{starting} to serve the dummy truck '0' with zero processing time. The planning horizon is divided into discrete time segments $\mathcal{T}=\{0,1,2,\dots, T\}$. We assume that $a_0=\delta_0 = 0$, $d_0=T$. Obviously, $|\mathcal{D}|<< |\mathcal{J}|$. Furthermore, it is reasonable to assume that the number of actually served trucks in every instance is no less than the number of available docks $|\mathcal{D}|$. 
\\

{
\begin{table}[h]
\caption{Parameters.}
\label{tab:params}
\centering
\scalebox{0.8}{
\begin{tabular}{l l}
\toprule

$S=\bigcup_{j\in\mathcal{J}} \mathcal{S}^j$ & set of all scenarios,\\
{}$\mathcal{D}=\{1,\dots,D\}$ &    set of dock doors,\\
'0' & dummy truck, \\
$\mathcal{J}$& set of trucks, $\mathcal{J}^\circ = \mathcal{J}\cup \{0\}$,\\
$\mathcal{T} = \{0,1,2,\dots, T\}$ & planning horizon, \\
{}$p_j^s$ & processing time, the time required to serve truck $j$ under scenario $s$, \\
$r_j$ &  arrival time of truck $j$,\\
$g_j$ & penalty cost for not serving truck $j$,\\
$\delta_j$ & setup time to align a truck in front of the gate and other preparations, \\
$d_j$ &  latest departure time of truck $j$,\\
$f_i$ &  penalty cost per every unit of time that truck $i$ is waiting idle,\\
$R^p$ & maximum number of available personals, \\
$R^e$ &maximum number of available equipments, \\
$R^v$ & maximum number of available vehicles ,\\
$n_p^s$ & required number of workers in scenario $s$,\\
$n_e^s$ & required number of equipments in scenario $s$,\\
$n_v^s$ & required number of vehicles in scenario $s$,\\
$\mathcal{S}^j$ & set of all possible scenarios for serving truck $j\in\mathcal{J}$.\\
\bottomrule\\

\end{tabular}
}
\end{table}
}

We define the $s$-th \textsc{Resource Deployment Scenario (RDS)} for serving truck $j$ as a quadruplet $s_j^s=(w^s_j, e^s_j, v^s_j, p^s_j)$, where $w_j^s$ stands for the number of workers required to serve truck $j$ under the RDS $s$, $e_j^s$ represents the number of items of equipment required to serve truck $j$ under the RDS $s$, $v_j^s$ indicates the number of vehicles deployed to serve truck $j$ under the RDS $s$ and $p^s_j$ stands for the processing time of truck $j$ under the RDS $s$.\\ 

We define $S=\bigcup_{j\in\mathcal{J}} \mathcal{S}^j$, where $\mathcal{S}^j = \{s^1_j, s^2_j, ..., s_j^{j_{max}}\}$  represents the set of possible resource deployment scenarios for serving truck $j$ (${j_{max}}$ may vary from $1$ to $j$ for different trucks). As the dummy node does not require any resources, only one scenario is associated with it, $s^0=(0,0,0,0)$. { We also define $x_{000}$ to indicate the number of unused docks or in other words the number of docks on which a dummy-to-dummy transition takes place at time '0' and no other real truck being served. } \\ 

$x_{ijts}$ takes $1$, if serving truck $i$ is already finished (the truck has already left) and serving truck $j$ after $i$ after the RDS $s$ has already been started at time period $t$, and $0$ otherwise. $y_{jt}^s$ takes $1$ if truck $j$ is being served in period $t$ and according to the resource allocation scenario $s$, $0$ otherwise. $\eta^s_j$ takes $1$ if truck $j$ is being served under scenario $s$, $0$ otherwise. $z_j$ takes $1$ if truck $j$ could not be served during the day (the planning horizon) and is planned for the next day.\\


\textbf{Dock Assignment and Truck Scheduling Problem with Resource Allocation and Scenario-dependent Processing Time (DATS-RA-SdPT)}\\


If the facility operator is willing to consider RDSs with various processing times, we would call this problem a scenario-dependent variant. In {DATS-RA-SdPT}, we need to know exactly which scenario has been adopted as it affects the processing times. \\

\begin{align}
            min \:      & \sum_{i,j\in \mathcal{J},t\in \mathcal{T}}  \sum_{s\in\mathcal{S}^j}x_{ijts} (t-r_j)f_j + \sum_{j \in \mathcal{J}} g_j z_j
            \label{eq:MSRAMPT:obj1}\\
            s.\:t.\:\nonumber\\
            &   \sum_{j\in \mathcal{J}}\sum_{t\in \mathcal{T}} \sum_{s\in \mathcal{S}^j}x_{0jts}+ x_{000}  = |\mathcal{D}|,     &      \label{eq:MSRAMPT:eq1}\\
            &   \sum_{i\in \mathcal{J}}\sum_{t\in \mathcal{T}} \sum_{s\in \mathcal{S}^j} x_{i0ts}+ x_{000} = |\mathcal{D}|,     &      \label{eq:MSRAMPT:eq2}\\
            &   \sum_{i\in \mathcal{J}:j\neq i}\sum_{t\in \mathcal{T}} \sum_{s\in \mathcal{S}^j} x_{ijts} = \sum_{s\in \mathcal{S}^j} \eta^s_j,   &   j\in\mathcal{J},\label{eq:MSRAMPT:eq3}\\
            &  \sum_{i\in \mathcal{J}:j\neq i}\sum_{t\in\mathcal{T}} \sum_{s\in \mathcal{S}^j} x_{jits} = \sum_{s\in \mathcal{S}^j} \eta^s_j,   &    j\in\mathcal{J},\label{eq:MSRAMPT:eq4}\\
            %
            %
            & x_{ijts} \leq \sum_{t'\geq \max\{r_l,t+p_{j}^{s}+\delta_j\}} \sum_{l\in \mathcal{J}^\circ:l\neq i , l\neq j}x_{jlt's}         & \forall i\in \mathcal{J}^\circ, j \in \mathcal{J}, t\in \mathcal{T}, s\in \mathcal{S}^j: j\neq i ,\label{eq:MSRAMPT:eq61}\\
            & x_{ijts} \leq y_{jt'}^s    &   \forall  i\in \mathcal{J}^\circ, j \in \mathcal{J}, t\in \mathcal{T}, s\in \mathcal{S}^j, \nonumber\\
            &   & t'\in\{t+\delta_j+1,\dots, t+p_{j}^{s}+\delta_j\}: j\neq i,\label{eq:MSRAMPT:eq71}\\
            & \sum_{t\in \mathcal{T}:r_j+\delta_j<t\leq d_j} y_{jt}^s = p_{j}^{s}\eta^s_j,& \forall j \in \mathcal{J}, s\in \mathcal{S}^j,\label{eq:MSRAMPT:eq72}\\
            &y_{jt}^s  \leq \eta^s_j, &  \forall j \in \mathcal{J}, t\in \mathcal{T}, s\in \mathcal{S}^j,\label{eq:MSRAMPT:eq81}\\
            &\sum_{s\in \mathcal{S^j}} \eta^s_j +z_j = 1, &  \forall j \in \mathcal{J},\label{eq:MSRAMPT:eq82}\\
            &\sum_{ j \in \mathcal{J}:  r_j + \delta_j < t \leq d_j  }\sum_{s\in \mathcal{S}^j}y_{jt}^s  n_p^s\leq R^p, & \forall t\in \mathcal{T}, \label{eq:MSRAMPT:eq91}\\
            &\sum_{ j \in \mathcal{J}:  r_j + \delta_j < t \leq d_j }\sum_{s\in \mathcal{S}^j}y_{jt}^s n_e^s \leq R^e, & \forall t\in \mathcal{T}, \label{eq:MSRAMPT:eq92}\\
            &\sum_{ j \in \mathcal{J}:  r_j + \delta_j < t \leq d_j }\sum_{s\in \mathcal{S}^j}y_{jt}^s  n_v^s \leq R^v, & \forall t\in \mathcal{T},\label{eq:MSRAMPT:eq93}\\
            &  x \in \{0,1\}^ {|\mathcal{J}^\circ||\mathcal{J}^\circ||\mathcal{T}||\mathcal{S}|}, z  \in \{0,1\}^ {|\mathcal{J}|}, x_{000}  \in \{1, ..., |\mathcal{D}|\}, \nonumber\\
            & \eta \in  \{0,1\}^ {|\mathcal{J}||\mathcal{S}|}, y \in  \{0,1\}^ {|\mathcal{J}||\mathcal{S}||\mathcal{T}|}
            .\label{eq:MSRAMPT:vars}
\end{align}

The objective function \eqref{eq:MSRAMPT:obj1} accounts for minimising the cost of trucks waiting to be served and the penalty cost of unfulfilled services, considering different scenarios.
Constraints \eqref{eq:MSRAMPT:eq1} and \eqref{eq:MSRAMPT:eq2} are to make sure that after (before) the dummy starts (ends), $\mathcal{D}$ trucks (dummy truck included) are being served on every dock. Constraints \eqref{eq:MSRAMPT:eq3} (\eqref{eq:MSRAMPT:eq4}) ensure that truck $j$ will precede (follow) another truck $i\in\mathcal{J}^\circ$ at some point in time $t$ on one of the available docks, only if a resource deployment scenario is considered for it; otherwise it will be considered as an unserved truck.
Constraints \eqref{eq:MSRAMPT:eq61} guarantee that at a given period $t$, the truck following truck $j$ on the same dock must give enough time for the docking and processing time of truck $j$, if the time window allows. These constraints avoid loop-backs in feasible solutions. Constraints \eqref{eq:MSRAMPT:eq71} ensure that truck $j$ receives services following a specific RDS. Constraints \eqref{eq:MSRAMPT:eq72} and \eqref{eq:MSRAMPT:eq81} guarantee that if truck $j$ is served, then for $p_j^s$ intervals of time in a row, the resources in a scenario are engaged to fulfill the service for truck $j$. Constraints \eqref{eq:MSRAMPT:eq82} make sure that only one resource allocation scenario can be used for serving a truck. Constraints \eqref{eq:MSRAMPT:eq91}- \eqref{eq:MSRAMPT:eq93} are here to ensure that, under any RDS, the resource capacity is respected at every point in time and for all resource types.\\

\textbf{Dock Assignment and Truck Scheduling Problem with Resource Allocation and Scenario-invariant Processing Time (DATS-RA-SiPT)}\\

A special case of {DATS-RA-SdPT} is a conservative variant of the problem, a situation in which the facility operator (the expert) is only willing to consider/propose scenarios with practically the same processing times. Therefore, $\forall s^j_l, s^j_k \in \mathcal{S}^j, l\neq k$, and we have $p_l^{j}=p_k^{j}$. Moreover, $x_{ijts}$ and $p_j^s$ boil down to $x_{ijt}$ and $p_j^s$.

\subsection{Preprocessing, symmetry reduction and valid inequalities}
There are some preprocessing and symmetry elimination efforts that can be formulated as in the sequel. We demonstrate them using the general case and the other one follows analogously.

\subsubsection{Symmetry}
In order to avoid solutions that can be obtained from other solutions with the same objective function’s value, we need to remove the symmetric structures as much as possible.

\begin{corollary}
Dummy truck '0' is a starting point and no RDS really applies to it (no resource is required). Therefore, we fix $s=0$ and can set $ x_{j0ts}= 0, ~\forall j\in\mathcal{J}^\circ, t\in\mathcal{T}, s\in \mathcal{S}^j:s>0$. Otherwise, we could replace $x_{j0ts}$ by  $x_{j0ts'}, s>0$ with the same objective value.
\end{corollary}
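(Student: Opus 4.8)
The plan is to argue by a standard symmetry/exchange argument: show that any feasible solution that assigns a nonzero scenario index on an arc entering the dummy truck $0$ can be transformed, without changing the objective value, into a feasible solution in which that arc carries the scenario index $s=0$. Since the dummy truck requires no resources and has processing time $p_0^s=0$ and setup $\delta_0=0$ for every scenario, the scenario label on an arc $x_{j0ts}$ is genuinely free: it appears in the constraints only through the scenario-coupling constraints \eqref{eq:MSRAMPT:eq61}, \eqref{eq:MSRAMPT:eq71}, \eqref{eq:MSRAMPT:eq72} and \eqref{eq:MSRAMPT:eq3}--\eqref{eq:MSRAMPT:eq4} when $j$ equals the dummy node, and in each of these the relevant terms involving $p_0^s$, $n_p^s$, $n_e^s$, $n_v^s$, $y_{0t}^s$ and $\eta_0^s$ are all zero by the definition $s^0=(0,0,0,0)$.

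First I would fix notation: suppose $(x,y,\eta,z,x_{000})$ is feasible and that for some $j\in\mathcal{J}^\circ$, $t\in\mathcal{T}$ and $s>0$ we have $x_{j0ts}=1$. I would then define $\bar x$ identical to $x$ except $\bar x_{j0ts}=0$ and $\bar x_{j0t0}=1$ (with the corresponding $\bar\eta$, $\bar y$ for node $0$ left at their zero values, which they already are). The second step is to verify feasibility of $(\bar x, y,\eta,z,x_{000})$ constraint by constraint: \eqref{eq:MSRAMPT:eq1}--\eqref{eq:MSRAMPT:eq2} are sums over all scenarios on arcs into/out of $0$, so they are invariant; \eqref{eq:MSRAMPT:eq3}--\eqref{eq:MSRAMPT:eq4} for the predecessor $j$ (if $j\in\mathcal{J}$) are likewise sums over $s\in\mathcal{S}^j$ and unaffected; \eqref{eq:MSRAMPT:eq61} and \eqref{eq:MSRAMPT:eq71} for the modified arc hold trivially because the right-hand side only loses/gains terms attached to node $0$ with $p_0^s+\delta_0=0$ and $y_{0t'}^s=0$ for all $s$; and the resource constraints \eqref{eq:MSRAMPT:eq91}--\eqref{eq:MSRAMPT:eq93} never involve node $0$. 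The third step is to note the objective \eqref{eq:MSRAMPT:obj1} sums $x_{ijts}(t-r_j)f_j$ over $j\in\mathcal{J}$ only, so arcs into the dummy node $0\notin\mathcal{J}$ contribute nothing and the objective value is preserved. Iterating this exchange over every such arc (finitely many) yields a feasible solution of equal cost with $x_{j0ts}=0$ for all $s>0$, so these variables may be fixed to zero a priori without losing any optimal solution.

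The only subtlety, and the step I would treat most carefully, is the edge case $j=0$, i.e. the dummy-to-dummy arcs, and the interaction with $x_{000}$: I must make sure the exchange argument is consistent with the separate counting variable $x_{000}$ and does not double-count idle docks. Here one checks that a dummy-to-dummy transition is, by construction, recorded either through $x_{000}$ at time $0$ or would have to carry $s=0$ anyway, so no genuine freedom in the scenario label exists there and the fixing $x_{00ts}=0$ for $s>0$ is vacuous rather than restrictive. Everything else is routine bookkeeping over the constraint list.
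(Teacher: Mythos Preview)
Your proposal is correct and rests on the same underlying observation as the paper: the dummy truck carries the trivial scenario $s^0=(0,0,0,0)$, so the scenario label on an arc into node~$0$ is cost- and constraint-irrelevant and can be normalised to $s=0$. The paper's own proof is a single sentence (``It naturally follows from the definitions given in \S\ref{subsec:params}''), so your constraint-by-constraint exchange argument is considerably more detailed than what the authors provide, but it is not a different route---just a fully written-out version of the same idea.
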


\begin{proof}
It naturally follows from the definitions given in \autoref{subsec:params}
\end{proof}

\begin{corollary}
On any dock, the last truck (just before the dummy one, if any) must be discharged as soon as it is served and the dummy truck must start immediately afterwards.
\begin{align}
            & x_{i0ts} \leq \sum_{l} x_{l~i~ t - p_{i}^{s} - \delta_i~s}         & \forall i \in \mathcal{J}, t\in \mathcal{T}, s\in \mathcal{S}^j,\label{eq:MSRAUPT:symmetry}
\end{align}
\end{corollary}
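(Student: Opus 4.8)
The plan is to read \eqref{eq:MSRAUPT:symmetry} as a \emph{symmetry-breaking} cut, exactly in the spirit of the preceding corollary: it need not hold at every feasible point, but from any feasible solution one can build another that does satisfy it and has the \emph{same} objective value, so that no optimal solution is lost when \eqref{eq:MSRAUPT:symmetry} is appended to the formulation.

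First I would fix a feasible solution and a triple $(i,t,s)$ with $x_{i0ts}=1$ (for a dock on which no arc into the dummy is active, the left-hand side of \eqref{eq:MSRAUPT:symmetry} is $0$ and the inequality is vacuous there, so this case is immediate). Such an arc certifies that $i$ is the last real truck served on its dock. From the degree constraints \eqref{eq:MSRAMPT:eq3}--\eqref{eq:MSRAMPT:eq4} together with \eqref{eq:MSRAMPT:eq82} I would deduce that $i$ is served under exactly one scenario and has a single incoming arc, say $x_{l\,i\,\tau\,s}=1$ with start time $\tau$; that the incoming and outgoing arcs of $i$ carry the same scenario index follows from \eqref{eq:MSRAMPT:eq61}, since the unique outgoing arc of $i$ is $x_{i0ts}$ and that constraint can only be met by the same-scenario copy. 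Applying \eqref{eq:MSRAMPT:eq61} to the incoming arc $x_{l\,i\,\tau\,s}$, the successor arc of $i$ --- precisely $x_{i0ts}$ --- is forced to satisfy $t\ge\tau+p_i^{s}+\delta_i$, i.e.\ the dummy cannot start before $i$ has actually completed. Hence the only way \eqref{eq:MSRAUPT:symmetry} can fail is the strict case $t>\tau+p_i^{s}+\delta_i$, an idle gap on that dock between the end of $i$'s service and the nominal start of the dummy.

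The key step is then the expected exchange: put $t^{\star}:=\tau+p_i^{s}+\delta_i\le t$ and, independently on each dock, replace $x_{i0ts}=1$ by $x_{i0t^{\star}s}=1$, changing nothing else. To check feasibility I would list the constraints that even mention arcs of the form $x_{i0\cdot s}$: the cardinality constraints \eqref{eq:MSRAMPT:eq2} and \eqref{eq:MSRAMPT:eq4} see such an arc only through its presence, not its time stamp, so they remain satisfied; the service/window constraints \eqref{eq:MSRAMPT:eq71}--\eqref{eq:MSRAMPT:eq72} and the resource constraints \eqref{eq:MSRAMPT:eq91}--\eqref{eq:MSRAMPT:eq93} involve no arc into the dummy at all (there $j=0\notin\mathcal{J}$, and the dummy has zero setup and processing time); and \eqref{eq:MSRAMPT:eq61} for the incoming arc $x_{l\,i\,\tau\,s}$ stays valid, now tight, because $t^{\star}=\tau+p_i^{s}+\delta_i$ still lies in the admissible range $\{t'\ge\tau+p_i^{s}+\delta_i\}$ (the dummy's release time is $0$). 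Moreover the dummy arc never appears in the objective \eqref{eq:MSRAMPT:obj1}, whose sum runs over $i,j\in\mathcal{J}$ only, so the objective value is unchanged. The modified solution is therefore feasible and equally good, and by construction $x_{i0t^{\star}s}=x_{l\,i\,(t^{\star}-p_i^{s}-\delta_i)\,s}=1$, which is \eqref{eq:MSRAUPT:symmetry} with equality; so \eqref{eq:MSRAUPT:symmetry} may be imposed without loss.

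The main obstacle I anticipate is almost entirely notational: pinning down the scenario $s$ that appears on both sides of \eqref{eq:MSRAUPT:symmetry} --- establishing that the incoming arc of the last truck $i$, its outgoing arc into the dummy, and the scenario activating $\eta^{\cdot}_i$ are all indexed by the same $s$ --- and reconciling this with the fixing $s=0$ on arcs entering the dummy asserted in the previous corollary. Once the indexing convention is fixed, the remainder is the routine ``schedule the terminating dummy as early as the last real truck permits'' exchange, and validity of \eqref{eq:MSRAUPT:symmetry} for optimisation purposes follows at once.
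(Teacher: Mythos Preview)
Your proposal is correct and follows essentially the same idea as the paper: since the dummy truck carries no objective weight and no real truck follows the last one on a dock, the start time of the terminating dummy can be pulled forward to the completion time of the last real truck without affecting feasibility or cost, so \eqref{eq:MSRAUPT:symmetry} may be imposed without loss. The paper's own proof states this in two informal sentences (the last truck has no real successor, the objective ignores the dummy, hence the departure time is otherwise arbitrary), whereas you spell out the exchange argument in full and carefully check each constraint class --- this is the same approach, only more rigorous.
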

\begin{proof}
Firstly, the last truck on a given dock is not followed with a real truck demanding resources (a real truck not the dummy truck). Secondly, the objective function is only concerned with reducing the waiting time of the real trucks (for those trucks that will receive service, as the dummy truck is not integrated into the objective function). Because of the two aforementioned points, the last truck can postpone its departure without any additional cost on the objective value. We therefore have to make sure that the last truck does not leave arbitrarily and will leave immediately after it becomes possible.
\end{proof}

For DATS-RA-SiPT, it follows analogously.

\subsubsection{Preprocessing and variable fixing}
Given the time windows, the processing time of all trucks and the set of different scenarios, several variables in every model \eqref{eq:MSRAMPT:obj1}- \eqref{eq:MSRAMPT:vars} can be fixed to $0$. Before continuing any further, we first need to introduce the notion of dominance in RDSs.\\

\begin{definition}
For serving a given truck $j\in\mathcal{J}$ two resource deployment scenarios $s_j^1=(w_j^1, e_j^1, v_j^1, p^1_j)$ and $s_j^2=(w_j^2, e_j^2, v_j^2, p^2_j)$ are given. It is said that $s^1$ \emph{generally dominates} $s^2$, $s^1\preceq s^2$, if for $w_j^1 \leq w_j^2, e_j^1 \leq e_j^2$ and $v_j^1 \leq v_j^2$, we obtain $p_j^1\leq p_j^2$.
\begin{itemize}
  \item The dominance is said to be \emph{time-wise strict dominance} ($\prec_T$) if in general dominance we obtain $p_j^1 < p_j^2$.
  \item The dominance is said to be \emph{resource-wise strict dominance} ($\prec_R$) if in general dominance by having strict inequality in at least one of the $w_j^1 \leq w_j^2, e_j^1 \leq e_j^2$ and $v_j^1 \leq v_j^2$, we obtain $p_j^1 < p_j^2$.
  \item The dominance is said to be \emph{strict dominance} ($\prec$) if in general dominance, by having strict inequality in at least one of $w_j^1 \leq w_j^2, e_j^1 \leq e_j^2, v_j^1 \leq v_j^2$ we obtain $p_j^1< p_j^2$.
\end{itemize}
\end{definition}

Clearly, we do not expect to have in our test-bed, the situations wherein $p_j^1 > p_j^2$ while $w_j^1 \leq w_j^2, e_j^1 \leq e_j^2$ and $v_j^1 \leq v_j^2$.

\begin{lem}
For a given truck $j\in\mathcal{J}$ and every $s^1_j, s^2_j\in \mathcal{S}^j$, if $s^1_j$ is in any case strictly dominated by $s^2_j$, then we can assume that $\eta_{j}^{s^1}=0, y_{jt}^{s^1}=0$ and $x_{ijt~s^1}=0$, for all $i\in \mathcal{J}^0, t\in\mathcal{T}$.
\end{lem}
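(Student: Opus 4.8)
The plan is to prove this by an \emph{exchange (domination) argument}: I would show that every feasible solution of \eqref{eq:MSRAMPT:obj1}--\eqref{eq:MSRAMPT:vars} which serves truck $j$ under the dominated scenario $s^1$ can be turned into a feasible solution which serves $j$ under $s^2$ and has an objective value no larger. This gives an optimal solution with $\eta_j^{s^1}=0$, and once that component is fixed to $0$ the remaining two equalities follow automatically: $y_{jt}^{s^1}\le \eta_j^{s^1}=0$ by \eqref{eq:MSRAMPT:eq81}, and since a real truck has $p_j^{s^1}\ge 1$, constraint \eqref{eq:MSRAMPT:eq71} (taking any period $t'$ in the nonempty service window) forces $x_{ijts^1}\le y_{jt'}^{s^1}=0$ for all $i\in\mathcal{J}^\circ$ and $t\in\mathcal{T}$. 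So the real content is the exchange step.

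For that step I would start from an arbitrary feasible $(x,y,\eta,z)$ with $\eta_j^{s^1}=1$. Then \eqref{eq:MSRAMPT:eq82} gives $z_j=0$ and $\eta_j^{s}=0$ for every $s\neq s^1$; \eqref{eq:MSRAMPT:eq3} gives a unique incoming arc $x_{\hat\imath j\hat t s^1}=1$ (uniqueness of the scenario label follows, as above, from \eqref{eq:MSRAMPT:eq71} and \eqref{eq:MSRAMPT:eq81}); and \eqref{eq:MSRAMPT:eq71}--\eqref{eq:MSRAMPT:eq72} pin down the service block $\{\hat t+\delta_j+1,\dots,\hat t+p_j^{s^1}+\delta_j\}\subseteq(r_j+\delta_j,d_j]$ on which $y_{jt}^{s^1}=1$. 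I would then build the modified solution by: (i) switching $\eta_j^{s^1}\to 0$ and $\eta_j^{s^2}\to 1$; (ii) relabelling the incoming arc, $x_{\hat\imath j\hat t s^1}\to 0$ and $x_{\hat\imath j\hat t s^2}\to 1$, keeping the same predecessor and the same start time $\hat t$; (iii) truncating the service block to its prefix of length $p_j^{s^2}$, i.e. $y_{jt}^{s^2}=1$ on $\{\hat t+\delta_j+1,\dots,\hat t+p_j^{s^2}+\delta_j\}$ with all other service variables of $j$ set to $0$; everything not indexed by $j$ is left untouched, in particular the arc leaving $j$, whose scenario label belongs to the successor of $j$.

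Next I would check feasibility clause by clause. Strict dominance gives $p_j^{s^2}\le p_j^{s^1}$, so the new service block is a sub-interval of the old one and still lies in $(r_j+\delta_j,d_j]$; hence \eqref{eq:MSRAMPT:eq71}, \eqref{eq:MSRAMPT:eq72} and \eqref{eq:MSRAMPT:eq81} hold. The degree constraints \eqref{eq:MSRAMPT:eq3}--\eqref{eq:MSRAMPT:eq4} and the dock counts \eqref{eq:MSRAMPT:eq1}--\eqref{eq:MSRAMPT:eq2} survive because the unique arc into $j$ was only relabelled, with a scenario still in $\mathcal{S}^j$, and the arc out of $j$ was not modified. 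Constraint \eqref{eq:MSRAMPT:eq61} is only relaxed: decreasing $p_j^{s}$ moves the threshold $\max\{r_l,\hat t+p_j^{s}+\delta_j\}$ to the left, enlarging the set of admissible start times of the truck following $j$. And strict dominance also gives $n_p^{s^2}\le n_p^{s^1}$, $n_e^{s^2}\le n_e^{s^1}$, $n_v^{s^2}\le n_v^{s^1}$, so the left-hand sides of \eqref{eq:MSRAMPT:eq91}--\eqref{eq:MSRAMPT:eq93} do not increase at any period and the resource capacities remain respected. Finally, for the objective \eqref{eq:MSRAMPT:obj1}: $z_j$ is unchanged, and the waiting-cost contribution of $j$ equals $(\hat t-r_j)f_j$ both before and after, since $\hat t$ was preserved; the objective value is therefore unchanged (it may in fact loosen the constraints on downstream trucks, but we do not need that).

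The hard part will not be any real mathematical difficulty but the bookkeeping around the scenario index: one must make sure that the scenario appearing on an outgoing arc of $j$ — and in the coupling term on the right-hand side of \eqref{eq:MSRAMPT:eq61} — is the scenario of the \emph{served} truck on that arc, so that rerouting $j$'s incoming arc and truncating its service block is genuinely a local modification that does not propagate; and that, if several scenarios of $j$ are pairwise dominated, the exchange is applied to a non-dominated representative and iterated, which terminates because $|\mathcal{S}^j|$ is finite. Granting this, the construction yields an optimal solution with $\eta_j^{s^1}=0$, hence $y_{jt}^{s^1}=0$ and $x_{ijts^1}=0$ for all $i\in\mathcal{J}^\circ$, $t\in\mathcal{T}$, which is exactly the claim; the same reasoning applies verbatim to DATS-RA-SiPT.
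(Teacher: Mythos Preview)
Your proposal is correct and follows the same exchange/domination argument as the paper: show that any feasible solution using $s^1$ can be replaced by one using $s^2$ with no worse objective, then read off $y_{jt}^{s^1}=0$ and $x_{ijts^1}=0$ from \eqref{eq:MSRAMPT:eq81} and \eqref{eq:MSRAMPT:eq71}. The paper's own proof is much terser and leaves the constraint-by-constraint feasibility check implicit, whereas you spell it out carefully (including the bookkeeping on the scenario label of the outgoing arc and the relaxation of \eqref{eq:MSRAMPT:eq61}); but the underlying idea is identical.
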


\begin{proof}
According to \eqref{eq:MSRAMPT:eq82}, if truck $j$ is served then there is only one $s$ for which $\eta_{jt}^{s}$ is non zero and according to \eqref{eq:MSRAMPT:eq72} only for that given $s$, we can have $\sum_{t\in \mathcal{T}:r_j+\delta_j<t\leq d_j}\sum_{s\in\mathcal{S}^j} y_{jt}^s=p_j$. Let us assume that $s = s^1_j$. Now, $s^1_j$ for serving truck $j$ is dominated by $s^2_j$ that consumes no more resources but has a strictly lower processing time or fewer resources and does not deliver a worse objective function. Therefore, $s^1_j$ becomes redundant in the presence of $s^2_j$ and can be totally removed from the model, thus reducing the symmetry. Moreover, in \eqref{eq:MSRAMPT:eq71}, $x_{ijt~s^1}=0,~ \forall i\in\mathcal{J}^0$.

\end{proof}

\begin{lem}
In the following situations, variables $x_{ijt}$ and $x_{ijts}$ can be fixed to zero (and be removed from the problem):
1) if $i\geq0,~ t>d_j$, 2) if $i\geq0,~ t< r_j$, 3) if $i\geq0,~ t< r_i+\delta_i+p_i$ in {DATS-RA-SiPT} and $i>0,~ t< r_i+\delta_i+p_{i}^{s}$ in {DATS-RA-SdPT}, 4) if $i\geq 0,~  t+ \delta_j+p_j > d_j$ in {DATS-RA-SiPT} and $i\geq0,~ t+ \delta_j+p_{j}^{s} > d_j$ in {DATS-RA-SdPT}, and 5) if $i=0, t + p_j + \delta_j > |T| -1$ in {DATS-RA-SiPT} and $i=0, t + p_{j}^{s} + \delta_j > |T| -1$  in {DATS-RA-SdPT}.
\end{lem}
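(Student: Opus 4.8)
The plan is to verify each of the five fixing rules by checking, case by case, that the defining constraints of the model force the corresponding variable to zero (or equivalently, that the variable can never appear in a feasible solution). Each rule corresponds to a simple feasibility argument driven by a time-window or precedence constraint, so the proof is essentially a bookkeeping exercise; I would organise it as five short paragraphs, treating the scenario-dependent case and noting that the scenario-invariant case follows by substituting $p_j$ for $p_j^s$.

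First, for rules 1 and 2, I would argue from the semantics of $x_{ijts}$ together with constraints \eqref{eq:MSRAMPT:eq71} and \eqref{eq:MSRAMPT:eq72}. If $x_{ijts}=1$ then service of truck $j$ under scenario $s$ starts at $t$, so by \eqref{eq:MSRAMPT:eq71} we need $y^s_{jt'}=1$ for $t'\in\{t+\delta_j+1,\dots,t+\delta_j+p^s_j\}$, and \eqref{eq:MSRAMPT:eq72} restricts all such $y$-variables to the window $r_j+\delta_j<t'\le d_j$. Starting service at $t>d_j$ would force a $y$-variable beyond $d_j$, a contradiction; starting it at $t<r_j$ would (combined with the already-established rule 3 below, or directly with \eqref{eq:MSRAMPT:eq72}) push a $y$-variable at or below $r_j+\delta_j$ outside the admissible range — so both are infeasible. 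For rule 3 I would use constraint \eqref{eq:MSRAMPT:eq61}: if truck $i$ precedes truck $j$ on a dock, then $i$ itself was started no earlier than $r_i$, occupies the dock for $\delta_i+p^s_i$ (in the SdPT case, for the scenario $s$ actually used for $i$), hence $j$ cannot start before $r_i+\delta_i+p^s_i$; the $i=0$ sub-case is vacuous since the dummy has zero setup and processing time, which is why the statement writes $i>0$ there.

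For rules 4 and 5 I would again combine the precedence/time-window constraints with the requirement that the dummy truck terminates each dock at time $T$. If $x_{ijts}=1$ then by \eqref{eq:MSRAMPT:eq61} some successor $l$ of $j$ must be started at a period $t'\ge \max\{r_l,\,t+\delta_j+p^s_j\}$; if $t+\delta_j+p^s_j>d_j$ this is impossible within $j$'s own window as captured by \eqref{eq:MSRAMPT:eq72} (the $y$-variables of $j$ would have to extend past $d_j$), giving rule 4. Rule 5 is the boundary version: when the successor is forced to be the end-dummy ($i=0$ being the predecessor chain indicator for the first real truck on a dock, or more precisely the case where $j$ is the last real truck), the completion time $t+\delta_j+p^s_j$ of $j$ must still leave room for the dummy's start at or before $T$, i.e. $t+\delta_j+p^s_j\le |T|-1$; violating this makes it impossible to close the dock with constraints \eqref{eq:MSRAMPT:eq2} and the corollary-\eqref{eq:MSRAUPT:symmetry} relation. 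In each case I would state the contradiction explicitly and conclude that the variable can be fixed to $0$ and deleted.

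The main obstacle I anticipate is not mathematical depth but precision in the indexing: the meaning of the index $i$ in $x_{ijts}$ (predecessor truck, possibly the dummy) must be handled carefully so that the $i=0$ versus $i\ge 0$ distinctions in rules 3 and 5 come out exactly right, and one must be careful that in the SdPT case the relevant processing time in rule 3 is the one of the scenario actually adopted for the \emph{predecessor} $i$, not for $j$. I would therefore be explicit that each claimed fixing holds for every feasible assignment of the remaining variables, so that removing these variables does not cut off any feasible — hence any optimal — solution. Once the indexing conventions are pinned down, each of the five items reduces to a one-line contradiction with one of \eqref{eq:MSRAMPT:eq61}, \eqref{eq:MSRAMPT:eq71}, \eqref{eq:MSRAMPT:eq72}, or \eqref{eq:MSRAMPT:eq2}, and the SiPT statements follow verbatim with $p^s_j$ replaced by $p_j$.
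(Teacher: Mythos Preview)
Your approach is correct and rests on the same feasibility reasoning as the paper, but you should know that the paper's own proof is far less formal: it gives exactly one sentence per rule, appealing only to the \emph{meaning} of the variable (e.g.\ ``serving truck $j$ cannot be scheduled to start before its earliest arrival time'') and never cites a specific constraint. Your plan to derive each fixing as an explicit contradiction with \eqref{eq:MSRAMPT:eq61}, \eqref{eq:MSRAMPT:eq71}, \eqref{eq:MSRAMPT:eq72}, etc.\ is a more rigorous rendering of the same argument, and the care you take with the $i=0$ versus $i>0$ distinction and with which scenario's processing time enters rule~3 goes beyond what the paper provides.

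One small slip: in your discussion of rule~5 you write that $i=0$ is ``more precisely the case where $j$ is the last real truck.'' That is backwards---$i$ is the \emph{predecessor}, so $i=0$ means $j$ is the \emph{first} real truck on its dock, not the last. The paper's own one-line justification for rule~5 (``serving the dummy truck cannot be terminated beyond the planning horizon'') is itself terse and somewhat ambiguous, so your instinct that the indexing here deserves care is well placed; just make sure your written argument for rule~5 does not conflate predecessor and successor. With that corrected, your five paragraphs would constitute a strictly more detailed proof than the paper's.
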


\begin{proof}
1)Service to a truck cannot be terminated after its latest departure time, 2)serving truck $j$ cannot be scheduled to start before its earliest arrival time, 3)serving truck $j$ cannot be started before all steps of the preceding trucks have been completed, 4)at time $t$, we cannot start serving truck $j$, if the completion time for it goes beyond the latest departure time and, 5)serving the dummy truck cannot be terminated beyond the planning horizon.

\end{proof}

\begin{lem}
$y_{jt}^s = 0$ for any $t$ not within  $( r_j + \delta_j , d_j ]$.
\end{lem}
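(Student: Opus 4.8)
The plan is to derive the claim directly from the constraints that govern the $y^s_{jt}$ variables, namely \eqref{eq:MSRAMPT:eq72} and \eqref{eq:MSRAMPT:eq81}, together with the preprocessing already established in the previous lemma. The statement asserts that $y^s_{jt}=0$ whenever $t\notin(r_j+\delta_j,\,d_j]$, i.e. for $t\le r_j+\delta_j$ or $t>d_j$, and this should follow from the fact that service to truck $j$ can neither occupy a period at or before its arrival-plus-setup time nor extend past its latest departure time.

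First I would fix a truck $j\in\mathcal{J}$ and a scenario $s\in\mathcal{S}^j$. If $z_j=1$ (truck $j$ is not served), then by \eqref{eq:MSRAMPT:eq82} we have $\eta^s_j=0$ for every $s\in\mathcal{S}^j$, and \eqref{eq:MSRAMPT:eq81} immediately forces $y^s_{jt}=0$ for all $t$, so there is nothing to prove. Hence assume truck $j$ is served, so exactly one scenario $\bar s$ has $\eta^{\bar s}_j=1$; for $s\neq\bar s$ the same argument via \eqref{eq:MSRAMPT:eq81} gives $y^s_{jt}=0$ for all $t$, so it remains to treat $s=\bar s$. Now \eqref{eq:MSRAMPT:eq72} reads $\sum_{t:\,r_j+\delta_j<t\le d_j} y^{\bar s}_{jt}=p^{\bar s}_j$, i.e. the only periods that are permitted to carry a nonzero $y^{\bar s}_{jt}$ contribution are those strictly inside the window $(r_j+\delta_j,\,d_j]$, and they must sum to exactly $p^{\bar s}_j$. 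Since each $y^{\bar s}_{jt}\in\{0,1\}$ and there are at most $d_j-(r_j+\delta_j)$ such periods (which the model implicitly assumes is at least $p^{\bar s}_j$ for feasibility), this equation is a statement purely about $t\in(r_j+\delta_j,d_j]$ and says nothing yet about $t$ outside that range.

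The point to nail down, then, is that the model contains no constraint permitting $y^{\bar s}_{jt}=1$ for $t\notin(r_j+\delta_j,d_j]$ while all other constraints remain satisfiable — more precisely, that any such variable can be fixed to $0$ without loss of generality. For $t>d_j$: a period of processing past the latest departure would, via \eqref{eq:MSRAMPT:eq71} and \eqref{eq:MSRAMPT:eq61}, correspond to an arc $x_{ijt's}$ with $t'+\delta_j+p^s_j$ or an intermediate processing period exceeding $d_j$, which is exactly case (1)/(4) of the preceding lemma where such $x$ variables are fixed to zero; with no arc entering the service of $j$ at a time that would place processing after $d_j$, the corresponding $y$ periods carry no support and can be set to $0$. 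For $t\le r_j+\delta_j$: symmetrically, case (2)/(3) of the preceding lemma fixes the relevant $x$-variables, and \eqref{eq:MSRAMPT:eq71} links the permissible $t'$ for $y^s_{jt'}$ to $\{t+\delta_j+1,\dots,t+p^s_j+\delta_j\}$ with $t\ge r_j$, so the earliest period that can legitimately be ``on'' is $r_j+\delta_j+1$. I would spell this out by noting that $y^{\bar s}_{jt}$ outside the window never appears with a positive coefficient in any satisfied constraint and contributes nothing to the objective, so fixing it to zero preserves feasibility and optimality.

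The main obstacle I anticipate is purely expository rather than mathematical: the constraint \eqref{eq:MSRAMPT:eq72} restricts its summation index to $t\in(r_j+\delta_j,d_j]$, so strictly speaking the formulation as written does not force $y^{\bar s}_{jt}=0$ outside that range — it merely declines to count those periods. The lemma is therefore really a preprocessing/variable-fixing statement (``these variables may be removed''), in the same spirit as the preceding lemma, and the cleanest proof is to argue that such variables are free (appear in no binding constraint, cost nothing) and hence can be fixed to $0$; I would make sure the write-up frames it that way rather than claiming it as a logical consequence of the stated equalities alone. A short remark tying periods $t\le r_j+\delta_j$ and $t>d_j$ back to the already-fixed $x$-variables of the previous lemma, via the coupling \eqref{eq:MSRAMPT:eq71}, closes the argument without any computation.
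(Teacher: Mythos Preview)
Your proposal is correct but takes a substantially more formal route than the paper. The paper's own proof is a one-sentence semantic justification: since $r_j$ is the arrival time and $\delta_j$ the setup time, truck $j$ cannot be under service before $r_j+\delta_j$ nor after its latest departure $d_j$. It treats the lemma purely as an interpretation of what $y^s_{jt}=1$ \emph{means}, not as a deduction from the constraint system.

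Your approach, by contrast, tries to derive the claim from \eqref{eq:MSRAMPT:eq72}, \eqref{eq:MSRAMPT:eq81}, the coupling \eqref{eq:MSRAMPT:eq71}, and the preceding $x$-fixing lemma, and you correctly observe the subtlety that the formulation as written does not literally force $y^s_{jt}=0$ outside the window --- the summation in \eqref{eq:MSRAMPT:eq72} simply ignores those indices --- so the lemma is best read as a preprocessing/variable-fixing statement. That observation is sharper than anything in the paper's proof and is a genuine contribution to understanding; the trade-off is that your argument is an order of magnitude longer for what the authors intend as an immediate consequence of the variable definitions. If you want to match the paper's style, a single sentence invoking the physical meaning of $r_j$, $\delta_j$, and $d_j$ suffices; if you want rigour, your framing (these variables are free outside the window and can be fixed to zero without loss) is the right one.
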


\begin{proof}
Since $r_j$ is the earliest arrival time and $\delta_j$ is the time required for $j$ to be set up, ready to be served, truck $j$ cannot be served either any earlier than $r_j+ \delta_j$ or, of course, any later than its latest departure time.
\end{proof}
The use of the aforementioned preprocessing and symmetry elimination techniques has been shown to be very effective in a wide range of initial computational experiments.

\subsubsection{Valid inequalities}
Some classes of valid inequalities can also be identified.
\paragraph{Combinatorial cuts}
Let $\Pi=\{\pi_1, \dots,\pi_{\Pi}\}$ be the set of trucks for which  $t\in\mathcal{T}$ belongs to the feasible time window. For every resource $r\in \mathcal{R}$, we pick a subset of the scenarios available for all trucks (one per truck), say $ \mathcal{S}'\subseteq \mathcal{S}^\pi, \pi\in\Pi$,  for which the use of resource $r$ is violated. The following set of constraints are valid inequalities for the model:

\begin{align}
&   \sum_{j\in {\Pi}}\sum_{s\in \mathcal{S}'\subset \mathcal{S}^j}y_{jt}^s \leq |\Pi| -1, &   \forall t \in \mathcal{T}.
\end{align}
\begin{proof}
This is complementary to the constraints \eqref{eq:MSRAMPT:eq91}-\eqref{eq:MSRAMPT:eq93}.
At every given time $t$ the total number of resources allocated to different trucks must not be violated. Therefore, if any such subset of trucks is identified, at least one truck cannot be served at that given $t$ using one of the proposed RDSs.
\end{proof}

\section{Solution Methods}\label{sec::solution}
In the phase of modelling and validations, we have conducted extensive computational experiments with many different mathematical models that we have developed, ranging from big-M formulations and compact formulations (for a direct application of general-purpose solvers such as CPLEX) to models with an exponential number of constraints and variables. Our computational experiments confirmed that none of those models and their associated solution processes could scale well to anything close to a real-life size instance of this problem. The main issue with the considered big-M models, as always, was the poor bound improvement and the well-known numerical instability, which rendered them inefficient even for toy-size instances. Compact models with no particular exploitable structures were not efficient as the bound improvement was an issue and the choice of variable to branch on was not done very efficiently by solvers such as CPLEX, Gurobi or those in the COIN-OR ecosystem, as the attributes associated with those variables were very similar, leading to many branches stalling in the same solution quality (we did not identify any remedy either). Similar issues were observed using the models with an exponential number of constraints. For models with an exponential number of columns (path-based models) the pricing problem and its constant changes implied by branching decisions on the master problem were an issue for some mid-size instances.\\

We therefore opted for a model structure that is a good candidate for a Dantzig-Wolfe reformulation, leading to 1) a master problem with properties inherited from the set-covering problem, 2) a branching strategy that can be applied with the original variables and in the master problem, and 3) the pricing problem that can be solved very efficiently using any general-purpose MIP solver.\\

We propose two Dantzig-Wolfe reformulations, one for the model in \eqref{eq:MSRAMPT:obj1}- \eqref{eq:MSRAMPT:vars} and another for the variant with more conservative conditions (which can be considered as a special case of the model proposed in \eqref{eq:MSRAMPT:obj1}- \eqref{eq:MSRAMPT:vars}). Then a branch-and-price approach is proposed to solve them.\black{} 

%

\subsection{Dantzig-Wolfe for DATS-RA-SdPT}
Let $\mathcal{K}$ be the set of pseudo-schedules generated by a pricing sub-problem. We need to introduce variables $h_j, \forall j\in \mathcal{J}^\circ$, which takes 1 if truck $j$ is being served in the proposed solution. Our Dantzig-Wolfe reformulation for {DATS-RA-SdPT} follows.\\

The Restricted Master Problem (RMP) follows:\

\begin{align}
min \:      &  \sum_{t\in \mathcal{T}}\sum_{i,j \in \mathcal{J}^\circ} \sum_{s\in\mathcal{S}^j} \sum_{k=1}^{|\mathcal{K}|}\lambda_{k}f_j(t-r_j)\widehat{x}_{ijts}^{k} \label{eq:MSRAMPT:rmp:obj}\\
            s.\:t.\:\nonumber\\
            (u^1_j):~~~~&   \sum_{r\in \mathcal{R}}\sum_{i\in \mathcal{J}^\circ:j\neq i}\sum_{s\in\mathcal{S}^j}\sum_{t=r_{i}}^{|{\mathcal{T}}|}\left(\sum_{k=1}^{|\mathcal{K}|}\lambda_{k}\widehat{x}_{ijts}^{k}\right) \leq 1,   &   \forall ~ j\in\mathcal{J},\label{eq:MSRAMPT:rmp:eq3}\\
            (u^2_j):~~~~&   \sum_{r\in \mathcal{R}}\sum_{i\in \mathcal{J}^\circ:j\neq i}\sum_{s\in\mathcal{S}^j}\sum_{t=r_{j}}^{|{\mathcal{T}}|}\left(\sum_{k=1}^{|\mathcal{K}|}\lambda_{k}\widehat{x}_{jits}^{k}\right) \leq 1,   &   \forall ~ j\in\mathcal{J},\label{eq:MSRAMPT:MP:eq4}\\
            (\alpha):~~~~&   \sum_{k=1}^{|\mathcal{K}|}\lambda_{k} =1,   &   \label{eq:MSRAMPT:rmp:eq3}                                \\
            (v_{ijts}):~~~~& x_{ijts} - \sum_{k=1}^{|\mathcal{K}|}\lambda_{k}\widehat{x}_{ijts}^{k} = 0 &  \forall  i\in \mathcal{J}^\circ, j \in \mathcal{J}, s\in\mathcal{S}^j, t\in \mathcal{T}\label{eq:MSRAMPT:rmp:eq4}\\
            &   \lambda_k  \geq 0,  & \forall k\in \mathcal{K} ,   \label{eq:MSRAMPT:rmp:eq5}\\
            & x \in \{0,1\}^ {|\mathcal{J}^\circ||\mathcal{J}^\circ||\mathcal{T}||\mathcal{S}|}\label{eq:MSRAMPT:rmp:vars}.
\end{align}

In this case, the Pricing Problem (PP) turns out to be:

\begin{align}
min \:      & \sum_{i,j\in \mathcal{J},t\in \mathcal{T}, s\in\mathcal{S}} f_j(t - r_j)  x_{ijts} + \sum_{j \in \mathcal{J}} g_j (1-\sum_{s\in \mathcal{S}^j} \eta^s_j) \nonumber\\
& - \sum_j u^1_j \left( \sum_{i\in \mathcal{J}^\circ :j\neq i}\sum_{t\in \mathcal{T}}\sum_{s\in\mathcal{S}^j}x_{ijts} \right) \nonumber\\
& - \sum_j u^2_j \left( \sum_{i\in \mathcal{J}^\circ :j\neq i}\sum_{t\in\mathcal{T}}\sum_{s\in \mathcal{S}^j}x_{jits} \right)\nonumber\\
&- \sum_{i,j\in \mathcal{J},t\in \mathcal{T},s\in \mathcal{S}^j} v_{ijts} x_{ijts} -\alpha
\label{eq:MSRAMPT:PP:obj1}\\
            s.\:t.\:\nonumber\\
      &   \sum_{j\in \mathcal{J}}\sum_{t\in \mathcal{T}}\sum_{s\in \mathcal{S}^j} x_{0jts} \leq |\mathcal{D}|,     &  \label{eq:MSRAMPT:PP:eq1}\\
    &   \sum_{i\in \mathcal{J}}\sum_{t\in \mathcal{T}}\sum_{s\in \mathcal{S}^i} x_{i0ts} \leq  |\mathcal{D}|,           &      \label{eq:MSRAMPT:PP:eq2}\\
    &   \sum_{i\in \mathcal{J}^\circ}\sum_{t\in \mathcal{T}} \sum_{s\in \mathcal{S}^j}(x_{ijts} -x_{jits} )= 0,           &   \forall j\in \mathcal{J}   \label{eq:MSRAMPT:PP:eq3}\\
    &   -\sum_{i,j\in \mathcal{J}:j\neq i}\sum_{t\in \mathcal{T}}\sum_{s\in \mathcal{S}^j} x_{ijts} + \sum_{i\in \mathcal{J}} h_i= |\mathcal{D}|,           &     \label{eq:MSRAMPT:PP:eq4}\\
    &   \sum_{t\in\mathcal{T}}\sum_{s\in \mathcal{S}^j} x_{ijts} \leq h_j,           &     \forall i,j\in \mathcal{J}^\circ,\label{eq:MSRAMPT:PP:eq4}\\
    &   \sum_{j\in \mathcal{J}^\circ}\sum_{t\in \mathcal{T}}\sum_{s\in \mathcal{S}^j} x_{ijts} \geq h_i,           &     \forall i\in \mathcal{J}^\circ,\label{eq:MSRAMPT:PP:eq5}\\
            & x_{ijts} \leq \sum_{t'\geq \max\{r_l, r_{j}+p_{j}^{s}+\delta_{j},t+p_{j}^{s}+\delta_j\}} \sum_{l\in \mathcal{J}^\circ:l\neq i , l\neq j}x_{jlt's}    &  i\in \mathcal{J}^\circ,  j \in \mathcal{J}, t\in \mathcal{T},  s\in \mathcal{S}^j: j\neq i ,\label{eq:MSRAMPT:PP:eq61}\\
            & x_{ijts} \leq y_{jt'}^s    &   \forall   i\in \mathcal{J}^\circ, j \in \mathcal{J}, t\in \mathcal{T}, s\in \mathcal{S}^j, \nonumber\\
            &   & t'\in\{t+\delta_j+1,\dots, t+p_{j}^{s}+\delta_j\}: j\neq i,\label{eq:MSRAMPT:PP:eq71}\\
            & \sum_{t\in \mathcal{T}:r_j+\delta_j<t\leq d_j} y_{jt}^s = p_{j}^{s}\eta_{j}^s ,& \forall j \in \mathcal{J}, s\in \mathcal{S}^j,\label{eq:MSRAMPT:PP:eq72}\\
            &y_{jt}^s  \leq \eta^s_j, &  \forall j \in \mathcal{J}, t\in \mathcal{T}, s\in \mathcal{S}^j,\label{eq:MSRAMPT:PP:eq81}\\
            %
            & \eqref{eq:MSRAMPT:eq91}, \eqref{eq:MSRAMPT:eq92}, \eqref{eq:MSRAMPT:eq93}\nonumber,\\
            &\eqref{eq:MSRAMPT:eq61}, \eqref{eq:MSRAMPT:rmp:vars}, \eqref{eq:MSRAUPT:symmetry}\nonumber,\\
            & h  \in \{0,1\}^ {|\mathcal{J}|},  \eta \in  \{0,1\}^ {|\mathcal{J}||\mathcal{S}|}, y \in  \{0,1\}^ {|\mathcal{J}||\mathcal{S}||\mathcal{T}|}.
\end{align}

The description of constraints follows analogously.
%
%
%
%
%

One may argue that, in fact, $h_j=1-z_j, ~~ \forall j \in \mathcal{J}$, yet it should be emphasised that out of our extensive initial computational experiments a significantly improved computational performance is observed in the presence of $h_j$ instead of its mathematically equivalent $1-z_j$ as revealed empirically.\\


The D-W decomposition for {DATS-RA-SiPT} follows in the same fashion except that we have $x_{ijt}$ instead of $x_{ijts}$.

\subsubsection{Branch-and-price algorithm}\label{sec:cut}

We now elaborate on the main components of our branch-and-price framework.

\paragraph{Cutting plane for pricing problem}

One of the main issues that the pricing problem in many cases will encounter is the presence of \emph{tri-cycles} including dummy trucks. Such unwanted solutions can be eliminated by on-the-fly addition of the following constraints through a branch-and-cut process. Such constraints for {DATS-RA-SiPT} follow:
\begin{align}
&   \sum_{t\in\mathcal{T}}(x_{ijt} + x_{i0t} + x_{j0t}) \leq 2, &   \forall i,j\neq i\in \mathcal{J} \label{const:tricycle1}\\
&   \sum_{t\in\mathcal{T}}(x_{ijt} + x_{0it} + x_{0jt}) \leq 2, &   \forall i,j\neq i\in \mathcal{J} \label{const:tricycle2}
\end{align}

For {DATS-RA-SdPT}, it follows analogously.\textsc{}

%

\paragraph{Tree search strategy and branching}\

Our extensive initial computational experiments have shown that a breadth-first search helps in faster overall convergence.

\paragraph{Branching} \

Constraints  \eqref{eq:MSRAMPT:rmp:eq4} are added to the corresponding master problems to enable branching on the original variables.

\paragraph{Initial column for the master problem}\

For both problems, a trivial solution is not a good feasible solution, wherein none of the trucks will be served and therefore there are $\mathcal{D}$ arcs encompassed from the first dummy to the last dummy. A better solution can be achieved by allocating one truck at a time to the selected docks in the chronological order of their arrival, keeping an eye on the feasibility of the capacity constraints, and backtracking in allocation as soon as infeasibility has been identified. Our initial computational experiments confirm that this often gives a relatively good initial solution to warm-start a branch-and-price procedure. We emphasise that our aim is to keep this procedure very inexpensive, so no improvement is intended at this stage.

\paragraph{The overall algorithm}
The overall flow of the solution process is presented in \autoref{algo}. In line 2, an initial column is being generated. In line 3, the master problem is set up and initialised with the initial column and in line 4, the pricing problem constraints are set up. Lines 5-12, are the main body of the iterations. In line 6, we solve the MP as a MIP problem, which already includes the original variables, and therefore the branching is done on those original variables. In line 7, we use the dual values from line 6 to establish the objective function and solve the pricing problem. Some callbacks are implemented here to separate (\eqref{const:tricycle1}-\eqref{const:tricycle2}) and cut off the irrelevant solutions (invalid pseudo-schedules). In line with this, if the objective function is negative, an improving column can be created from such a solution, added to the MP as a column (a new variable) and then we go back to line 6 to resolve the MP, otherwise we go to 11 while the whole algorithm converges to optimality.

\makeatletter
\def\BState{\State\hskip-\ALG@thistlm}
\makeatother

\begin{algorithm}
\caption{Solution Method.}\label{algo}
\begin{algorithmic}[1]
\Procedure{Branch-and-Price}{}
\State $\textit{init\_col} \gets fillDockUntilResCapViolated()$ \label{alg:initcol}
\State $MP=initMasterProblem(init\_col)$ \label{alg:MP}
\State $PP=initPricingProblemConstraints()$ \label{alg:PP}
\BState \emph{loop}:
\State $solve$ $MP$
\State objPP, PPsol = solve $PP$ by branch-and-cut and separating (\eqref{const:tricycle1}-\eqref{const:tricycle2}), and get pseudo schedules (columns)
    \If {objPP<0}
        \State $MP=MP + col(PPsol)$.
    \Else
        \State break
    \EndIf
\EndProcedure
\end{algorithmic}
\end{algorithm}

\section{Computational Experiments}\label{sec::numerical}

The branch-and-price algorithm is written in C++ and is an in-house engineered code that has been maintained for over a decade. Our computational experiments are conducted on a Dell PRECISION 5820 (Intel Xenon W 2245 (8 cores, 3.9GHz, 4.7GHz Turbo)) and 32GB RAM running on Ubuntu 18.04. The compact formulation \eqref{eq:MSRAMPT:obj1} - \eqref{eq:MSRAMPT:vars} as well as both pricing problems are also solved using CPLEX 12.10.0 Concert C++ interface  while some callbacks such as \texttt{ILOUSERCUTCALLBACK}, \texttt{ILOLAZYCONSTRAINTCALLBACK} ({ in order to identify the violated cuts and  separated them}) and \texttt{ILOMIPINFORCALLBACK} {(in order to query CPLEX for different information such as best bound and incumbent objective value, and many others, along the branch-and-cut process)}, are also implemented to control, customise and influence the flow of the algorithm as well as introducing custom termination criteria.\\

Inspired by the inherent structure of the original real dataset and being restricted by some confidentiality agreements, we had to generate a \emph{skewed} dataset, which is a reasonable representation of the real one.  Rather than relying on a purely random test-bed, we extract the distribution of different events (arrival, departures, service time, etc.) from a historical dataset with 1095 days of operations per cross-dock and use a discrete-event simulation model to simulate the same cross-dock and generate our test-bed. The simulation model is built in \cite{Anylogic} and is a hybrid model of multi-agent simulation and discrete-event simulation. While further elaboration of this model is beyond the scope of this work, in our simulation model, every dock is represented by an agent within which a discrete event model is integrated. The agents communicate with each other, receive a truck, seize and release resources from a pool of lift trucks, labour and other equipment and collect statistics.  \\

\subsection{The current practice}
What is referred to as the '\emph{current practice}' here is described in the following: \\
Currently, only one alternative is available to any scenario, for serving a loading/unloading truck, and this alternative has almost the same processing time. Another observation is that often the number of resources allocated to an inbound/outbound truck varies during the loading/unloading operations, which makes it difficult to calculate a processing time for the amount of resources deployed.\
We carried out an extensive analysis of the data and resorted to several regression and classification techniques to identify and extract, from an enormous volume of historical data, some alternative scenarios and their corresponding processing time. We were then able to generate additional alternative scenarios and estimate their corresponding processing times. A limited number of scenarios were approved (sometimes by some modifications) by the operational managers and some others were dropped (some were judged to be impractical). Conversely, inspired by our proposals, operational managers were coming up with proposals for complete scenarios and we were able to prove or question the associated processing time using the trained machine learning models.\\
We finally concluded, for every truck, a set of scenarios with the same processing times as well as some scenarios with different processing times. A trade-off was made between the number of potential scenarios and the computational intractability.\\

By introducing a portfolio of alternative scenarios and searching among potentially better options, a practical reduction of 5.3 to 11.2 percent in the objective function cost was observed compared to the current practice.
By considering only the first 2 scenarios for serving every truck, one still achieves an objective value that is a lower bound on the objective function's value of the current practice.

\subsection{Instance generation} In the following, we focus only on one cross-dock. The model was tested and validated by carrying out a 1095-day simulation and fine tuning of parameters. We then used this model to generate instances of different sizes. We deal with a regional cross-dock of average size with the number of docks, $20\leq |\mathcal{D}| \leq 60$ and the number of trucks $3\times d  \leq |\mathcal{J}| \leq \min\{4\times d, 200),  \forall d\in \mathcal{D}$. In total, 236 instances were generated. The time unit in our instances is 30 minutes  and an 8-hour shift corresponding to $T=16$ time frames is considered. The arrival times have some similarity to a Poisson distributions function, but we decided instead to randomly generate numbers following a uniform distribution function in the interval $[0, T\times 0.75]$. For the trucks' service and docking time, we used the uniform distribution $U(\frac{T}{8}, \frac{T}{4})$ and $U[1,3]$, respectively. The time windows were generated by augmenting the sum of the aforementioned service and docking times by an additional value from $U[3,5]$\footnote{the grace time is already included.}. Using the historical data and following consultation with the experts for the number of scenarios per truck, we used $U[1,4]$; for the resources, we considered $U(3,6)$ for personnel; $U[1,3]$ for machines and $U[3,7]$ for vehicles. The penalty cost, $f_i$ for every unit of time waiting for truck $i$, based on the expert recommendations, is generated following $U[5,10]$. The penalty cost for not serving a truck is set equal to $g_i = 100\times f_i$, based on the experts' input. \\

Instances are named following the format '\emph{T-xx-d-yy-tr-zz-sce-ss}', where '\emph{xx}' corresponds to $T=16$, '\emph{yy}' represents the number of docks and '\emph{zz}' stands for the number of trucks. Finally, '\emph{ss}' represents the total number of scenarios in an instance.\\
%
%

While using the compact formulation \eqref{eq:MSRAMPT:obj1} - \eqref{eq:MSRAMPT:vars}, we set a time limit \verb"IloCplex:TiLim" to 10,800 seconds in CPLEX. One may observe in the following tables a small deviation from this time limit. Normally, this was due to the fact that CPLEX did not manage to terminate exactly at 10,800 seconds and terminated instead upon completion of the LP resolve at the active node of the branch-and-bound tree.

\subsection{Compact formulations: DATS-RA-SiPT and DATS-RA-SdPT}

In \autoref{tbl:directCplex:SiPT} and \autoref{tbl:directCplex:SdPT}, {}we report our results of applying CPLEX to solve the compact model. \black{}In these tables, the first column corresponds to the instance' name.
The number of nodes processed before termination is reported in ('\#Nodes'), while the status upon termination is reported in ('Status') and the computational time ('CPUTime') for every instance is reported in the last column.\\

            \begin{footnotesize}
                \begin{longtable}{lccc}
                \caption{Computational experiments on  DATS-RA-SiPT with CPLEX.}\label{tbl:directCplex:SiPT} \\
            \specialrule{.2em}{.1em}{.1em}
            Instance		& \#Nodes	&Status	&CPUTime (sec.)	\\
\specialrule{.2em}{.1em}{.1em}
tf-16-d-20-tr-60-sce-120&	2536&	Optimal&	6558.14\\
tf-16-d-20-tr-65-sce-120&		256&	Optimal&	2664.80\\
tf-16-d-20-tr-70-sce-120&		5546&	Optimal&	6398.21\\
tf-16-d-20-tr-75-sce-120&		12003&	Optimal&	7806.28\\
tf-16-d-20-tr-80-sce-120&		6725&	\textbf{{\scriptsize Feasible
 (11.32\%)}}&	\texttt{tLim}\\
tf-16-d-20-tr-85-sce-120&		9825&	{\scriptsize Feasible
 (23.25\%)}&	\texttt{tLim}\\
tf-16-d-20-tr-90-sce-120&		3598&	{\scriptsize Feasible
 (41.00\%)}&	\texttt{tLim}\\
tf-16-d-20-tr-95-sce-120&		6988&	{\scriptsize Feasible
 (22.65\%)}&	\texttt{tLim}\\
tf-16-d-20-tr-100-sce-120&		5589&	{\scriptsize Feasible
 (35.55\%)}&	\texttt{tLim}\\
 \hline
tf-16-d-22-tr-66-sce-130&		169&	Optimal&	1824.56\\
tf-16-d-22-tr-71-sce-130&		15009&	\textbf{{\scriptsize Feasible
 (3.14\%)}}&	\texttt{tLim}\\
tf-16-d-22-tr-76-sce-130&		7823&	\textbf{{\scriptsize Feasible
 (4.35\%)}}&	\texttt{tLim}\\
tf-16-d-22-tr-81-sce-130&		9985&	\textbf{{\scriptsize Feasible
 (15.03\%)}}&	\texttt{tLim}\\
tf-16-d-22-tr-86-sce-130&		6689&	{\scriptsize Feasible
 (17.78\%)}&	\texttt{tLim}\\
tf-16-d-22-tr-91-sce-130&		8234&	{\scriptsize Feasible
 (4.69\%)}&	\texttt{tLim}\\
tf-16-d-22-tr-96-sce-130&		6821&	{\scriptsize Feasible
 (25.33\%)}&	\texttt{tLim}\\
tf-16-d-22-tr-101-sce-130&		6529&	{\scriptsize Feasible
 (41.37\%)}&	\texttt{tLim}\\
tf-16-d-22-tr-106-sce-130&		7025&	{\scriptsize Feasible
 (42.25\%)}&	\texttt{tLim}\\
 \hline
tf-16-d-24-tr-72-sce-140&		5889&	Optimal&	8819.08\\
tf-16-d-24-tr-77-sce-140&		2387&	\textbf{{\scriptsize Feasible
 (4.02\%)}}&	\texttt{tLim}\\
tf-16-d-24-tr-82-sce-140&		5435&	{\scriptsize Feasible
 (15.15\%)}&	\texttt{tLim}\\
tf-16-d-24-tr-87-sce-140&		4385&	{\scriptsize Feasible
 (55.26\%)}&	\texttt{tLim}\\
tf-16-d-24-tr-92-sce-140&		6923&	{\scriptsize Feasible
 (44.01\%)}&	\texttt{tLim}\\
tf-16-d-24-tr-97-sce-140&		1786&	{\scriptsize Feasible
 (21.02\%)}&	\texttt{tLim}\\
 tf-16-d-24-tr-102-sce-140&		701&	{\scriptsize Feasible
 (32.44\%)}&	\texttt{tLim}\\
\specialrule{.2em}{.1em}{.1em}
                 \end{longtable}
            \end{footnotesize}

            \begin{footnotesize}
                \begin{longtable}{lccc}
                \caption{Computational experiments on DATS-RA-SdPT with CPLEX.}\label{tbl:directCplex:SdPT} \\
            \specialrule{.2em}{.1em}{.1em}
            Instance		& \#Nodes	&Status	&CPUTime (sec.)	\\
\specialrule{.2em}{.1em}{.1em}
tf-16-d-20-tr-60-sce-120&	3781&	Optimal&	4692.54\\
tf-16-d-20-tr-65-sce-120&		1012&	Optimal&	5996.25\\
tf-16-d-20-tr-70-sce-120&		3644&	Optimal&	7773.14\\
tf-16-d-20-tr-75-sce-120&		10522&	\textbf{{\scriptsize Feasible
 (11.08\%)}}&	\texttt{tLim}\\
tf-16-d-20-tr-80-sce-120&		8853&	{\scriptsize Feasible
 (20.17\%)}&	\texttt{tLim}\\
tf-16-d-20-tr-85-sce-120&		4215&	\textbf{{\scriptsize Feasible
 (13.39\%)}}&	\texttt{tLim}\\
tf-16-d-20-tr-90-sce-120&		2599&	{\scriptsize Feasible
 (58.21\%)}&	\texttt{tLim}\\
tf-16-d-20-tr-95-sce-120&		6045&	{\scriptsize Feasible
 (24.17\%)}&	\texttt{tLim}\\
tf-16-d-20-tr-100-sce-120&		3699&	{\scriptsize Feasible
 (42.15\%)}&	\texttt{tLim}\\
 \hline
tf-16-d-22-tr-66-sce-130&		1332&	Optimal&	2585.23\\
tf-16-d-22-tr-71-sce-130&		2899&	{\scriptsize Feasible
 (14.22\%)}&	\texttt{tLim}\\
tf-16-d-22-tr-76-sce-130&		6532&	\textbf{{\scriptsize Feasible
 (6.65\%)}}&	\texttt{tLim}\\
tf-16-d-22-tr-81-sce-130&		6855&	{\scriptsize Feasible
 (23.14\%)}&	\texttt{tLim}\\
tf-16-d-22-tr-86-sce-130&		7012&	{\scriptsize Feasible
 (16.88\%)}&	\texttt{tLim}\\
tf-16-d-22-tr-91-sce-130&		6996&	\textbf{{\scriptsize Feasible
 (8.36\%)}}&	\texttt{tLim}\\
tf-16-d-22-tr-96-sce-130&		3458&	{\scriptsize Feasible
 (41.15\%)}&	\texttt{tLim}\\
tf-16-d-22-tr-101-sce-130&		4846&	{\scriptsize Feasible
 (36.38\%)}&	\texttt{tLim}\\
tf-16-d-22-tr-106-sce-130&		6604&	{\scriptsize Feasible
 (39.66\%)}&	\texttt{tLim}\\
 \hline
tf-16-d-24-tr-72-sce-140&		4366&	Optimal&	10015.15\\
tf-16-d-24-tr-77-sce-140&		5801&	\textbf{{\scriptsize Feasible
 (8.28\%)}}&	\texttt{tLim}\\
tf-16-d-24-tr-82-sce-140&		5088&	{\scriptsize Feasible
 (25.23\%)}&	\texttt{tLim}\\
tf-16-d-24-tr-87-sce-140&		2101&	{\scriptsize Feasible
 (48.23\%)}&	\texttt{tLim}\\
tf-16-d-24-tr-92-sce-140&		3256&	{\scriptsize Feasible
 (36.55\%)}&	\texttt{tLim}\\
tf-16-d-24-tr-97-sce-140&		2155&	{\scriptsize Feasible
 (27.25\%)}&	\texttt{tLim}\\
 tf-16-d-24-tr-102-sce-140&		3566&	{\scriptsize Feasible
 (41.15\%)}&	\texttt{tLim}\\
\specialrule{.2em}{.1em}{.1em}
                 \end{longtable}
            \end{footnotesize}

In both \autoref{tbl:directCplex:SiPT} and \autoref{tbl:directCplex:SdPT}, one observes that the challenge is there even for small size instances with 20 docks and 60-80 trucks within a computational time limit of 3 hours. In the 'Status' column, the occasional parentheses report the relative gap upon meeting the termination criteria, if optimality was not reached. For the few cases in \textbf{bold}, the best feasible solution found was indeed an optimal one but could not be proven within the time limit (it was confirmed by letting the solution process continue beyond the limits defined by the termination criteria). In the 'CPUTime' column, if optimality was not reached within the time limit, we indicate this by '\texttt"tLim"' in the table\\

Every table is divided into three sections corresponding to the instances with 20, 22 and 24 docks. One observes that in \autoref{tbl:directCplex:SiPT} only in the first block were the first few instances solved to optimality within the time limit. In the other two groups, only one of the instances (the smallest one) was solved when CPLEX was being used as a black-box commercial solver. In \autoref{tbl:directCplex:SdPT}, we observe that, as the number of decision variables increases, the performance of CPLEX is significantly lower.\\

One observes that the number of nodes being processed in both tables remains relatively small. This indicates that even solving the LP nodes in the branch-and-bound tree remains a challenge for the solver, hence the smaller number of nodes processed within the time limit.\\

{}While we are not aware of any, there should be models designed to be solver-friendly and perform/scale better on the medium/large size instances. Meanwhile, the model here is proposed to be exploited in a dual decomposition fashion.\black{}

\subsection{Branch-and-price approach}
In \autoref{tbl:realistic}, we report our computational experiments on both DATS-RA-SiPT and DATS-RA-SdPT problem instances. The table is divided into several horizontal sections corresponding to different values of $d\in\mathcal{D}$. Every section has a number of columns, where the first column represents the instance name, the second one reports the number of branch-and-bound nodes processed at the master problem level, the third column indicates how the last pricing problem has been terminated, the fourth one states the computational time, the fifth column reports  the number of columns generated during the process (excluding the initial column) and the last one indicates the number of calls to the pricing problem.\\

For all those instances reported in \autoref{tbl:directCplex:SiPT} and \autoref{tbl:directCplex:SdPT}, we were able to solve the instances in less than 300 seconds. We therefore, do not report them here anymore.\\

We solve the pricing problem using a breadth-first search. The termination criteria are: 1) 10,800 seconds passed and at least one solution with a negative objective value has been observed, 2) the objective value remains at 0 and 6$\times$ 3,600 seconds (6 hours) has been passed on solving the pricing problem. When using CPLEX to solve the pricing problem, we extract all the integer solutions with non-positive objective values encountered along the search and add them as new columns to the master problem. As such solutions remain in the pool, the next call to CPLEX does not start from scratch and the solution process continues from where it left last time. \\

In our initial computational experiments, whenever needed, we separated and added cuts from among the cuts introduced in \autoref{sec:cut}. However, the number of such effective cuts being separated remained extremely limited and the overhead introduced by such separation caused a very low return on investment. We are convinced that this phase can be totally excluded and no cut of this type needs to be separated. \\

We solved 185 instances (corresponding to the realistic size ones) for each of the two studied problems. The proposed branch-and-price solves to optimality 129 instances of DATS-RA-SiPT and 141 instances of DATS-RA-SdPT. In a couple of cases, for DATS-RA-SdPT, the last pricing problem terminated with a gap, but of less than 10 $\%$.\\

One very remarkable observation in both tables is that in no case did we ever need to process more than one node in the restricted master problem. This indicates that the restricted master problem inherits some useful properties from the set-covering problem and tends to deliver integer solutions. This is very useful because every branching changes the structure in the pricing problem, and this becomes challenging in particular if we decide to solve the pricing problem with ad-hoc algorithms. Another aspect is that, if we use a modern MIP solver such as CPLEX to solve the pricing problem, all solutions kept in the solution pool from the previous invocations of CPLEX remain feasible in the new pricing problem and we can start from them as a feasible solution (warm-start).\\

Roughly 70 $\%$ of the instances in the case of DATS-RA-SiPT and 76 $\%$ of the instances in the case of DATS-RA-SdPT were solved to optimality. The sign 'AbortUser' indicates that the overall process was terminated once the user-defined termination criteria were met. We carried out some pathological investigations and it was revealed that a part of this is caused by an excessive level of symmetry from which the pricing problem suffers ---many branchings in the pricing problem with no meaningful impact on the bound improvement. One of the identified causes of this symmetry is the presence of similar scenarios, making it difficult for CPLEX to converge fast. It must be emphasised that this issue becomes less important in DATS-RA-SdPT as this issue with scenarios has been eliminated. \\

We pooled all the solutions encountered by the pricing problem that had a negative objective function value. All these solutions were used to generate columns in the restricted master problem. Therefore, instead of one single column per iteration, we may have many such columns added to the restricted master problem. We also implemented a hashing system to avoid any potential duplication. In any case, the total number of columns added to the pricing problem rarely exceeded 50 (only 3 times).\\

Another remarkable point is that the number of calls to the sub-problem never exceeded 5, while the restricted master problem was always solved at the root node.\\

For calculating the gap, whenever the last pricing problem was not optimal (terminated by 'AbortUser'), the absolute gap between the best incumbent and the best lower (dual) bound is reported.\\

In general, the reported results confirm that we are able to tackle real-size instances of the problem in reasonable time and the approach is computationally viable.
{
\begin{landscape}
\begin{tiny}
    \begin{longtable}{l|ccccc|cccccc}
    \caption{Computational experiments of solving instances of realistic size using the branch-and-price algorithm.}\label{tbl:realistic} \\
\specialrule{.2em}{.1em}{.1em}
&           &           \multicolumn{5}{c}{DATS-RA-SiPT}&           \multicolumn{5}{c}{DATS-RA-SdPT}\\
\hline
Instance	& \#Nodes	&Status	&CPUTime (sec.)	&\#nCols	&\#PrIter	& \#Nodes	&Status	&CPUTime (sec.)	&\#nCols	&\#PrIter		\\
\specialrule{.2em}{.1em}{.1em}
\endfirsthead
\multicolumn{11}{c}{\tablename~\thetable: Computational experiments of solving instances of realistic size using the branch-and-price algorithm. (continued)} \\
\specialrule{.2em}{.1em}{.1em}
&           &           \multicolumn{5}{c}{DATS-RA-SiPT}&           \multicolumn{5}{c}{DATS-RA-SdPT}\\
\hline
Instance	& \#Nodes	&Status	&CPUTime (sec.)	&\#nCols	&\#PrIter	& \#Nodes	&Status	&CPUTime (sec.)	&\#nCols	&\#PrIter		\\
\specialrule{.2em}{.1em}{.1em}
\endhead
tf-16-d-30-tr-90-sce-225	&			1	&	Optimal	&	235.69	&	3	&	2	&	1	&	Optimal	&	286.93	&	20	&	2		\\
tf-16-d-30-tr-95-sce-238	&			1	&	Optimal	&	1055.87	&	9	&	2	&	1	&	Optimal	&	1590.63	&	29	&	2		\\
tf-16-d-30-tr-100-sce-250	&			1	&	Optimal	&	1027.37	&	21	&	2	&	1	&	Optimal	&	928.33	&	26	&	2		\\
tf-16-d-30-tr-105-sce-263	&			1	&	Optimal	&	26.49	&	11	&	2	&	1	&	Optimal	&	2658.41	&	2	&	2		\\
tf-16-d-30-tr-110-sce-275	&			1	&	Optimal	&	4574.31	&	9	&	2	&	1	&	Optimal	&	4681.66	&	17	&	2		\\
tf-16-d-30-tr-115-sce-288	&			1	&	Optimal	&	3599.59	&	15	&	2	&	1	&	Optimal	&	3201.50	&	11	&	2		\\
tf-16-d-30-tr-120-sce-300	&			1	&	Optimal	&	7039.32	&	1	&	2	&	1	&	Optimal	&	4847.27	&	20	&	2		\\
tf-16-d-30-tr-125-sce-313	&			1	&	Optimal	&	640.81	&	21	&	2	&	1	&	Optimal	&	4015.27	&	24	&	2		\\
tf-16-d-30-tr-130-sce-325	&			1	&	Optimal	&	4689.57	&	15	&	2	&	1	&	Optimal	&	2729.48	&	30	&	2		\\
tf-16-d-30-tr-135-sce-338	&			1	&	Optimal	&	8229.29	&	21	&	2	&	1	&	Optimal	&	6099.37	&	23	&	2		\\
tf-16-d-30-tr-140-sce-350	&			1	&	Optimal	&	7044.13	&	22	&	2	&	1	&	Optimal	&	3984.88	&	29	&	2		\\
tf-16-d-30-tr-145-sce-363	&			1	&	Optimal	&	12659.69	&	5	&	2	&	1	&	Optimal	&	10375.50	&	25	&	2		\\
tf-16-d-30-tr-150-sce-375	&			1	&	Optimal	&	3599.84	&	15	&	2	&	1	&	Optimal	&	300.09	&	3	&	2		\\
\hline																											
tf-16-d-32-tr-96-sce-240	&			1	&	Optimal	&	903.43	&	2	&	2	&	1	&	Optimal	&	1890.27	&	1	&	2		\\
tf-16-d-32-tr-101-sce-253	&			1	&	Optimal	&	2291.82	&	7	&	2	&	1	&	Optimal	&	2090.53	&	12	&	2		\\
tf-16-d-32-tr-106-sce-265	&			1	&	Optimal	&	806.01	&	27	&	2	&	1	&	Optimal	&	2695.84	&	11	&	2		\\
tf-16-d-32-tr-111-sce-278	&			1	&	Optimal	&	1737.03	&	31	&	2	&	1	&	Optimal	&	642.22	&	5	&	2		\\
tf-16-d-32-tr-116-sce-290	&			1	&	Optimal	&	6477.01	&	25	&	2	&	1	&	Optimal	&	7038.91	&	8	&	2		\\
tf-16-d-32-tr-121-sce-303	&			1	&	Optimal	&	8092.61	&	13	&	2	&	1	&	Optimal	&	8214.88	&	27	&	2		\\
tf-16-d-32-tr-126-sce-315	&			1	&	Optimal	&	8124.69	&	15	&	2	&	1	&	Optimal	&	11485.39	&	15	&	2		\\
tf-16-d-32-tr-131-sce-328	&			1	&	Optimal	&	4193.22	&	20	&	2	&	1	&	Optimal	&	8437.50	&	32	&	2		\\
tf-16-d-32-tr-136-sce-340	&			1	&	Optimal	&	6433.22	&	20	&	2	&	1	&	Optimal	&	6525.22	&	12	&	2		\\
tf-16-d-32-tr-141-sce-353	&			1	&	Optimal	&	925.51	&	15	&	2	&	1	&	Optimal	&	7927.31	&	20	&	2		\\
tf-16-d-32-tr-146-sce-365	&			1	&	Optimal	&	10285.83	&	28	&	2	&	1	&	Optimal	&	4890.69	&	15	&	2		\\
tf-16-d-32-tr-151-sce-378	&			1	&	Optimal	&	2105.91	&	10	&	2	&	1	&	Optimal	&	12724.49	&	30	&	2		\\
tf-16-d-32-tr-156-sce-390	&			1	&	Optimal	&	2706.79	&	6	&	2	&	1	&	Optimal	&	12701.66	&	11	&	2		\\
\hline																											
tf-16-d-34-tr-102-sce-255	&			1	&	Optimal	&	1202.22	&	15	&	2	&	1	&	Optimal	&	4008.56	&	11	&	2		\\
tf-16-d-34-tr-107-sce-268	&			1	&	Optimal	&	2020.60	&	18	&	2	&	1	&	Optimal	&	5530.82	&	17	&	2		\\
tf-16-d-34-tr-112-sce-280	&			1	&	Optimal	&	4728.14	&	16	&	2	&	1	&	Optimal	&	463.43	&	18	&	2		\\
tf-16-d-34-tr-117-sce-293	&			1	&	Optimal	&	7437.29	&	34	&	2	&	1	&	Optimal	&	6305.11	&	30	&	2		\\
tf-16-d-34-tr-122-sce-305	&			1	&	Optimal	&	7146.63	&	24	&	2	&	1	&	Optimal	&	677.83	&	24	&	2		\\
tf-16-d-34-tr-127-sce-318	&			1	&	Optimal	&	9748.51	&	27	&	2	&	1	&	Optimal	&	917.20	&	11	&	2		\\
tf-16-d-34-tr-132-sce-330	&			1	&	Optimal	&	4429.91	&	13	&	2	&	1	&	Optimal	&	4447.55	&	12	&	2		\\
tf-16-d-34-tr-137-sce-343	&			1	&	Optimal	&	12985.99	&	9	&	2	&	1	&	Optimal	&	9091.44	&	32	&	2		\\
tf-16-d-34-tr-142-sce-355	&			1	&	Optimal	&	12696.96	&	7	&	2	&	1	&	Optimal	&	9796.06	&	14	&	2		\\
tf-16-d-34-tr-147-sce-368	&			1	&	Optimal	&	13662.28	&	17	&	2	&	1	&	Optimal	&	6461.13	&	20	&	2		\\
tf-16-d-34-tr-152-sce-380	&			1	&	Optimal	&	6573.10	&	32	&	2	&	1	&	Optimal	&	17282.59	&	33	&	2		\\
tf-16-d-34-tr-157-sce-393	&			1	&	Optimal	&	8645.88	&	30	&	2	&	1	&	Optimal	&	1885.27	&	30	&	2		\\
tf-16-d-34-tr-162-sce-405	&			1	&	Optimal	&	9901.46	&	4	&	2	&	1	&	Optimal	&	6690.55	&	3	&	2		\\
tf-16-d-34-tr-167-sce-418	&			1	&	Optimal	&	4907.21	&	4	&	2	&	1	&	Optimal	&	22069.24	&	20	&	2		\\
\hline																											
tf-16-d-36-tr-108-sce-270	&			1	&	Optimal	&	1843.30	&	4	&	2	&	1	&	Optimal	&	131.17	&	18	&	2		\\
tf-16-d-36-tr-113-sce-283	&			1	&	Optimal	&	3443.13	&	21	&	2	&	1	&	Optimal	&	4868.46	&	27	&	2		\\
tf-16-d-36-tr-118-sce-295	&			1	&	Optimal	&	2924.09	&	29	&	2	&	1	&	Optimal	&	3290.90	&	4	&	2		\\
tf-16-d-36-tr-123-sce-308	&			1	&	Optimal	&	5638.39	&	33	&	2	&	1	&	Optimal	&	7921.75	&	30	&	2		\\
tf-16-d-36-tr-128-sce-320	&			1	&	Optimal	&	4467.54	&	20	&	2	&	1	&	Optimal	&	1862.74	&	32	&	2		\\
tf-16-d-36-tr-133-sce-333	&			1	&	Optimal	&	11563.01	&	19	&	2	&	1	&	Optimal	&	1637.19	&	34	&	2		\\
tf-16-d-36-tr-138-sce-345	&			1	&	Optimal	&	5230.85	&	10	&	2	&	1	&	Optimal	&	4249.40	&	15	&	2		\\
tf-16-d-36-tr-143-sce-358	&			1	&	Optimal &	3810.31	&	23	&	2	&	1	&	{\tiny AbortUser(5.51)}	&	4728.74	&	19	&	2		\\
tf-16-d-36-tr-148-sce-370	&			1	&	Optimal	&	14509.73	&	36	&	2	&	1	&	{\tiny AbortUser(20.11)}	&	9131.49	&	20	&	2		\\
tf-16-d-36-tr-153-sce-383	&			1	&	Optimal	&	11433.84	&	35	&	2	&	1	&	Optimal	&	4521.12	&	33	&	2		\\
tf-16-d-36-tr-158-sce-395	&			1	&	{\tiny AbortUser(30.00)}		&	14233.21	&	3	&	2	&	1	&	Optimal	&	3287.53	&	27	&	2		\\
tf-16-d-36-tr-163-sce-408	&			1	&	{\tiny AbortUser(46.49)}	&	17862.84	&	9	&	2	&	1	&	Optimal	&	1481.21	&	19	&	2		\\
tf-16-d-36-tr-168-sce-420	&			1	&	{\tiny AbortUser(32.71)}	&	646.06	&	20	&	2	&	1	&	Optimal	&	23023.35	&	34	&	2		\\
tf-16-d-36-tr-173-sce-433	&			1	&	{\tiny AbortUser(81.71)}	&	4024.20	&	20	&	2	&	1	&	Optimal	&	3279.50	&	27	&	2		\\
tf-16-d-36-tr-178-sce-445	&			1	&	{\tiny AbortUser(103.00)}	&	10320.18	&	2	&	2	&	1	&	{\tiny AbortUser(10.45)}	&	15477.90	&	10	&	2		\\
\hline																											
tf-16-d-38-tr-114-sce-285	&			1	&	Optimal	&	1084.50	&	17	&	2	&	1	&	Optimal	&	618.43	&	32	&	2		\\
tf-16-d-38-tr-119-sce-298	&			1	&	Optimal	&	4619.54	&	8	&	2	&	1	&	Optimal	&	6958.46	&	25	&	2		\\
tf-16-d-38-tr-124-sce-310	&			1	&	Optimal	&	6268.22	&	34	&	2	&	1	&	Optimal	&	13202.01	&	14	&	2		\\
tf-16-d-38-tr-129-sce-323	&			1	&	Optimal	&	1020.40	&	15	&	2	&	1	&	Optimal	&	1985.22	&	23	&	2		\\
tf-16-d-38-tr-134-sce-335	&			1	&	Optimal	&	8261.16	&	26	&	2	&	1	&	Optimal	&	10498.56	&	5	&	2		\\
tf-16-d-38-tr-139-sce-348	&			1	&	Optimal	&	7999.22	&	28	&	2	&	1	&	Optimal	&	9965.19	&	15	&	2		\\
tf-16-d-38-tr-144-sce-360	&			1	&	optimal	&	9154.31	&	35	&	2	&	1	&	Optimal	&	628.71	&	23	&	2		\\
tf-16-d-38-tr-149-sce-373	&			1	&	Optimal	&	14025.53	&	38	&	2	&	1	&	Optimal	&	13424.33	&	8	&	2		\\
tf-16-d-38-tr-154-sce-385	&			1	&	Optimal	&	7670.92	&	25	&	2	&	1	&	Optimal	&	4029.71	&	30	&	2		\\
tf-16-d-38-tr-159-sce-398	&			1	&	optimal	&	1404.93	&	27	&	2	&	1	&	{\tiny AbortUser(14.14)}	&	19463.47	&	30	&	2		\\
tf-16-d-38-tr-164-sce-410	&			1	&	{\tiny AbortUser(136.85)}	&	3738.30	&	31	&	2	&	1	&	Optimal	&	13373.05	&	22	&	2		\\
tf-16-d-38-tr-169-sce-423	&			1	&	{\tiny AbortUser(356.44)}	&	6554.25	&	17	&	2	&	1	&	Optimal	&	18532.83	&	16	&	2		\\
tf-16-d-38-tr-174-sce-435	&			1	&	{\tiny AbortUser(89.25)}	&	9350.64	&	2	&	2	&	1	&	{\tiny AbortUser(135.22)}	&	22729.86	&	28	&	2		\\
tf-16-d-38-tr-179-sce-448	&			1	&	{\tiny AbortUser(75.00)}	&	22113.29	&	22	&	2	&	1	&	{\tiny AbortUser(147.19)}	&	4119.03	&	4	&	2		\\
tf-16-d-38-tr-184-sce-460	&			1	&	{\tiny AbortUser(201.32)}	&	26773.60	&	29	&	2	&	1	&	{\tiny AbortUser(202.69)}	&	2798.95	&	25	&	2		\\
tf-16-d-38-tr-189-sce-473	&			1	&	{\tiny AbortUser(312.73)}	&	27639.51	&	15	&	2	&	1	&	{\tiny AbortUser(312.58)}	&	6627.90	&	10	&	2		\\
\hline																											
tf-16-d-40-tr-120-sce-300	&			1	&	Optimal	&	3115.37	&	26	&	2	&	1	&	Optimal	&	8645.86	&	36	&	2		\\
tf-16-d-40-tr-125-sce-313	&			1	&	Optimal	&	6736.44	&	23	&	2	&	1	&	Optimal	&	11665.65	&	39	&	2		\\
tf-16-d-40-tr-130-sce-325	&			1	&	Optimal	&	2196.47	&	23	&	2	&	1	&	Optimal	&	3009.63	&	4	&	2		\\
tf-16-d-40-tr-135-sce-338	&			1	&	Optimal	&	8274.96	&	22	&	2	&	1	&	Optimal	&	7666.37	&	15	&	2		\\
tf-16-d-40-tr-140-sce-350	&			1	&	Optimal	&	123.72	&	21	&	2	&	1	&	Optimal	&	6001.77	&	20	&	2		\\
tf-16-d-40-tr-145-sce-363	&			1	&	Optimal	&	43.67	&	7	&	2	&	1	&	Optimal	&	16365.93	&	16	&	2		\\
tf-16-d-40-tr-150-sce-375	&			1	&	Optimal	&	11294.45	&	17	&	2	&	1	&	Optimal	&	6016.26	&	16	&	2		\\
tf-16-d-40-tr-155-sce-388	&			1	&	Optimal	&	11244.66	&	10	&	2	&	1	&	Optimal	&	10429.31	&	28	&	2		\\
tf-16-d-40-tr-160-sce-400	&			1	&	Optimal	&	9074.38	&	15	&	2	&	1	&	Optimal	&	852.52	&	17	&	2		\\
tf-16-d-40-tr-165-sce-413	&			1	&	optimal	&	822.97	&	9	&	2	&	1	&	{\tiny AbortUser(296.20)}	&	25878.45	&	36	&	2		\\
tf-16-d-40-tr-170-sce-425	&			1	&	Optimal	&	21315.70	&	35	&	2	&	1	&	Optimal	&	30038.73	&	10	&	2		\\
tf-16-d-40-tr-175-sce-438	&			1	&	{\tiny AbortUser(262.84)}	&	5033.66	&	37	&	2	&	1	&	{\tiny AbortUser(45.22)}	&	31902.08	&	15	&	2		\\
tf-16-d-40-tr-180-sce-450	&			1	&	{\tiny AbortUser(301.77)}	&	24284.51	&	23	&	2	&	1	&	{\tiny AbortUser(298.29)}	&	19591.87	&	34	&	2		\\
tf-16-d-40-tr-185-sce-463	&			1	&	{\tiny AbortUser(254.44)}	&	7660.51	&	5	&	2	&	1	&	Optimal	&	22455.90	&	29	&	2		\\
tf-16-d-40-tr-190-sce-475	&			1	&	{\tiny AbortUser(292.57)}	&	26479.31	&	27	&	2	&	1	&	{\tiny AbortUser(385.52)}	&	14282.93	&	25	&	2		\\
tf-16-d-40-tr-195-sce-488	&			1	&	{\tiny AbortUser(411.05)}	&	2878.06	&	24	&	2	&	1	&	{\tiny AbortUser(87.67)}	&	11458.41	&	4	&	2		\\
\hline																											
tf-16-d-42-tr-126-sce-315	&			1	&	Optimal	&	11803.86	&	10	&	2	&	1	&	Optimal	&	3580.64	&	35	&	2		\\
tf-16-d-42-tr-131-sce-328	&			1	&	Optimal	&	6277.80	&	13	&	2	&	1	&	Optimal	&	5044.50	&	1	&	2		\\
tf-16-d-42-tr-136-sce-340	&			1	&	Optimal	&	9068.88	&	20	&	2	&	1	&	Optimal	&	92.90	&	33	&	2		\\
tf-16-d-42-tr-141-sce-353	&			1	&	Optimal	&	1126.50	&	38	&	2	&	1	&	Optimal	&	13617.55	&	24	&	2		\\
tf-16-d-42-tr-146-sce-365	&			1	&	Optimal	&	11568.16	&	1	&	2	&	1	&	Optimal	&	2136.80	&	22	&	2		\\
tf-16-d-42-tr-151-sce-378	&			1	&	Optimal	&	18897.14	&	5	&	2	&	1	&	Optimal	&	1409.64	&	22	&	2		\\
tf-16-d-42-tr-156-sce-390	&			1	&	Optimal	&	20618.73	&	38	&	2	&	1	&	Optimal	&	6992.40	&	10	&	2		\\
tf-16-d-42-tr-161-sce-403	&			1	&	Optimal	&	4484.73	&	23	&	2	&	1	&	Optimal	&	2262.71	&	1	&	2		\\
tf-16-d-42-tr-166-sce-415	&			1	&	Optimal	&	8460.09	&	26	&	2	&	1	&	Optimal	&	29005.46	&	31	&	2		\\
tf-16-d-42-tr-171-sce-428	&			1	&	Optimal	&	17570.49	&	12	&	2	&	1	&	Optimal	&	29163.98	&	27	&	2		\\
tf-16-d-42-tr-176-sce-440	&			1	&	Optimal	&	17466.13	&	21	&	2	&	1	&	Optimal	&	19011.32	&	24	&	2		\\
tf-16-d-42-tr-181-sce-453	&			1	&	{\tiny AbortUser(85.10)}	&	26548.73	&	42	&	2	&	1	&	{\tiny AbortUser(412.45)}	&	35286.14	&	12	&	2		\\
tf-16-d-42-tr-186-sce-465	&			1	&	{\tiny AbortUser(407.00)}	&	12956.92	&	26	&	2	&	1	&	Optimal	&	13525.68	&	33	&	2		\\
tf-16-d-42-tr-191-sce-478	&			1	&	{\tiny AbortUser(261.71)}	&	32013.48	&	22	&	2	&	1	&	{\tiny AbortUser(1222.15)}	&	35278.78	&	32	&	2		\\
tf-16-d-42-tr-196-sce-490	&			1	&	{\tiny AbortUser(845.40)}	&	18764.52	&	33	&	2	&	1	&	{\tiny AbortUser(487.25)}	&	10121.90	&	12	&	2		\\
\hline																											
tf-16-d-44-tr-132-sce-330	&			1	&	Optimal	&	8108.67	&	5	&	2	&	1	&	Optimal	&	1160.32	&	43	&	2		\\
tf-16-d-44-tr-137-sce-343	&			1	&	Optimal	&	4694.80	&	33	&	2	&	1	&	Optimal	&	11217.85	&	12	&	2		\\
tf-16-d-44-tr-142-sce-355	&			1	&	Optimal	&	15019.42	&	43	&	2	&	1	&	Optimal	&	7011.17	&	10	&	2		\\
tf-16-d-44-tr-147-sce-368	&			1	&	Optimal	&	14412.55	&	40	&	2	&	1	&	Optimal	&	14723.45	&	27	&	2		\\
tf-16-d-44-tr-152-sce-380	&			1	&	Optimal	&	13499.23	&	9	&	2	&	1	&	Optimal	&	26169.53	&	22	&	2		\\
tf-16-d-44-tr-157-sce-393	&			1	&	Optimal	&	11604.41	&	3	&	2	&	1	&	Optimal	&	4402.64	&	7	&	2		\\
tf-16-d-44-tr-162-sce-405	&			1	&	Optimal	&	21407.89	&	38	&	2	&	1	&	Optimal	&	8872.95	&	3	&	2		\\
tf-16-d-44-tr-167-sce-418	&			1	&	Optimal	&	2210.36	&	39	&	2	&	1	&	Optimal	&	14830.15	&	42	&	2		\\
tf-16-d-44-tr-172-sce-430	&			1	&	Optimal	&	21570.85	&	2	&	2	&	1	&	{\tiny AbortUser(58.21)}	&	4426.53	&	31	&	2		\\
tf-16-d-44-tr-177-sce-443	&			1	&	{\tiny AbortUser(148.28)}	&	25397.59	&	2	&	2	&	1	&	Optimal	&	10027.16	&	12	&	2		\\
tf-16-d-44-tr-182-sce-455	&			1	&	{\tiny AbortUser(269.10)}	&	22596.59	&	8	&	2	&	1	&	{\tiny AbortUser(128.36)}	&	3093.20	&	13	&	2		\\
tf-16-d-44-tr-187-sce-468	&			1	&	Optimal	&	28162.16	&	10	&	2	&	1	&	{\tiny AbortUser(148.74)}	&	34484.66	&	15	&	2		\\
tf-16-d-44-tr-192-sce-480	&			1	&	Optimal	&	20977.78	&	32	&	2	&	1	&	{\tiny AbortUser(223.44)}	&	18999.82	&	7	&	2		\\
tf-16-d-44-tr-197-sce-493	&			1	&	{\tiny AbortUser(580.53)}	&	2863.76	&	34	&	2	&	1	&	{\tiny AbortUser(115.25)}	&	29043.71	&	34	&	2		\\
\hline																											
tf-16-d-46-tr-138-sce-345	&			1	&	Optimal	&	17105.27	&	45	&	2	&	1	&	Optimal	&	6391.85	&	28	&	2		\\
tf-16-d-46-tr-143-sce-358	&			1	&	Optimal	&	12635.66	&	20	&	2	&	1	&	Optimal	&	9058.01	&	17	&	2		\\
tf-16-d-46-tr-148-sce-370	&			1	&	Optimal	&	2847.70	&	30	&	2	&	1	&	Optimal	&	23718.00	&	30	&	2		\\
tf-16-d-46-tr-153-sce-383	&			1	&	Optimal	&	22499.03	&	3	&	2	&	1	&	Optimal	&	5126.84	&	16	&	2		\\
tf-16-d-46-tr-158-sce-395	&			1	&	Optimal	&	6398.52	&	38	&	2	&	1	&	Optimal	&	8696.67	&	11	&	2		\\
tf-16-d-46-tr-163-sce-408	&			1	&	Optimal	&	16867.99	&	1	&	2	&	1	&	Optimal	&	25967.47	&	9	&	2		\\
tf-16-d-46-tr-168-sce-420	&			1	&	Optimal	&	2023.58	&	14	&	2	&	1	&	Optimal	&	17830.72	&	28	&	2		\\
tf-16-d-46-tr-173-sce-433	&			1	&	Optimal	&	29507.91	&	32	&	2	&	1	&	Optimal	&	27180.27	&	6	&	2		\\
tf-16-d-46-tr-178-sce-445	&			1	&	{\tiny AbortUser(451.27)}	&	4086.57	&	36	&	2	&	1	&	Optimal	&	13944.00	&	16	&	2		\\
tf-16-d-46-tr-183-sce-458	&			1	&	{\tiny AbortUser(425.42)}	&	9247.53	&	28	&	2	&	1	&	{\tiny AbortUser(185.69)}	&	36295.84	&	41	&	2		\\
tf-16-d-46-tr-188-sce-470	&			1	&	Optimal	&	29290.65	&	1	&	2	&	1	&	{\tiny AbortUser(125.96)}	&	33876.64	&	38	&	2		\\
tf-16-d-46-tr-193-sce-483	&			1	&	{\tiny AbortUser(69.96)}	&	321.12	&	21	&	2	&	1	&	{\tiny AbortUser(482.36)}	&	32665.91	&	13	&	2		\\
tf-16-d-46-tr-198-sce-495	&			1	&	{\tiny AbortUser(1965.16)}	&	32940.72	&	7	&	2	&	1	&	Optimal	&	37661.68	&	44	&	2		\\
\hline																											
tf-16-d-48-tr-144-sce-360	&			1	&	Optimal	&	1946.99	&	25	&	2	&	1	&	Optimal	&	20905.42	&	4	&	2		\\
tf-16-d-48-tr-149-sce-373	&			1	&	Optimal	&	11337.13	&	48	&	2	&	1	&	Optimal	&	14069.85	&	8	&	2		\\
tf-16-d-48-tr-154-sce-385	&			1	&	Optimal	&	9992.61	&	11	&	2	&	1	&	Optimal	&	2583.20	&	1	&	2		\\
tf-16-d-48-tr-159-sce-398	&			1	&	Optimal	&	6890.31	&	21	&	2	&	1	&	Optimal	&	13026.42	&	38	&	2		\\
tf-16-d-48-tr-164-sce-410	&			1	&	Optimal	&	19460.70	&	36	&	2	&	1	&	Optimal	&	15586.51	&	4	&	2		\\
tf-16-d-48-tr-169-sce-423	&			1	&	Optimal	&	21008.31	&	28	&	2	&	1	&	Optimal	&	15888.43	&	7	&	2		\\
tf-16-d-48-tr-174-sce-435	&			1	&	Optimal	&	20569.09	&	32	&	2	&	1	&	{\tiny AbortUser(185.13)}	&	28191.02	&	46	&	2		\\
tf-16-d-48-tr-179-sce-448	&			1	&	Optimal	&	18986.21	&	32	&	2	&	1	&	Optimal	&	4516.18	&	24	&	2		\\
tf-16-d-48-tr-184-sce-460	&			1	&	{\tiny AbortUser(1105.15)}	&	4592.94	&	39	&	2	&	1	&	Optimal	&	8824.93	&	13	&	2		\\
tf-16-d-48-tr-189-sce-473	&			1	&	{\tiny AbortUser(982.11)}	&	14143.83	&	43	&	2	&	1	&	Optimal	&	8440.14	&	3	&	2		\\
tf-16-d-48-tr-194-sce-485	&			1	&	Optimal	&	31735.83	&	16	&	2	&	1	&	Optimal	&	43003.31	&	12	&	2		\\
tf-16-d-48-tr-199-sce-498	&			1	&	{\tiny AbortUser(1693.12)}	&	16053.66	&	45	&	2	&	1	&	Optimal	&	364.84	&	8	&	2		\\
\hline																											
tf-16-d-50-tr-150-sce-375	&			1	&	Optimal	&	22017.75	&	7	&	2	&	1	&	Optimal	&	9612.79	&	25	&	2		\\
tf-16-d-50-tr-155-sce-388	&			1	&	Optimal	&	4541.82	&	29	&	2	&	1	&	Optimal	&	31417.37	&	34	&	2		\\
tf-16-d-50-tr-160-sce-400	&			1	&	Optimal	&	8590.92	&	11	&	2	&	1	&	Optimal	&	11197.69	&	30	&	2		\\
tf-16-d-50-tr-165-sce-413	&			1	&	Optimal	&	7249.05	&	30	&	2	&	1	&	Optimal	&	26552.88	&	34	&	2		\\
tf-16-d-50-tr-170-sce-425	&			1	&	optimal	&	20035.59	&	16	&	2	&	1	&	Optimal	&	15126.80	&	39	&	2		\\
tf-16-d-50-tr-175-sce-438	&			1	&	Optimal	&	4790.98	&	44	&	2	&	1	&	{\tiny AbortUser(178.28)}	&	26979.40	&	35	&	2		\\
tf-16-d-50-tr-180-sce-450	&			1	&	Optimal	&	25194.18	&	29	&	2	&	1	&	Optimal	&	14393.04	&	21	&	2		\\
tf-16-d-50-tr-185-sce-463	&			1	&	{\tiny AbortUser(365.63)}	&	22964.42	&	36	&	2	&	1	&	{\tiny AbortUser(187.27)}	&	28425.11	&	49	&	2		\\
tf-16-d-50-tr-190-sce-475	&			1	&	{\tiny AbortUser(415.87)}	&	13262.91	&	3	&	2	&	1	&	{\tiny AbortUser(109.55)}	&	990.10	&	43	&	2		\\
tf-16-d-50-tr-195-sce-488	&			1	&	{\tiny AbortUser(298.92)}	&	14409.58	&	49	&	2	&	1	&	Optimal	&	20389.97	&	24	&	2		\\
\hline																											
tf-16-d-52-tr-156-sce-390	&			1	&	Optimal	&	14091.38	&	30	&	2	&	1	&	Optimal	&	18522.81	&	27	&	2		\\
tf-16-d-52-tr-161-sce-403	&			1	&	Optimal	&	19920.50	&	26	&	2	&	1	&	Optimal	&	744.69	&	32	&	3		\\
tf-16-d-52-tr-166-sce-415	&			1	&	Optimal	&	20829.76	&	36	&	2	&	1	&	Optimal	&	26321.42	&	38	&	3		\\
tf-16-d-52-tr-171-sce-428	&			1	&	Optimal	&	8010.56	&	22	&	2	&	1	&	Optimal	&	2792.71	&	32	&	2		\\
tf-16-d-52-tr-176-sce-440	&			1	&	{\tiny AbortUser(368.44)}	&	2865.29	&	35	&	2	&	1	&	Optimal	&	43375.39	&	52	&	2		\\
tf-16-d-52-tr-181-sce-453	&			1	&	{\tiny AbortUser(436.80)}	&	15777.11	&	14	&	2	&	1	&	{\tiny AbortUser(189.98)}	&	1982.33	&	12	&	2		\\
tf-16-d-52-tr-186-sce-465	&			1	&	Optimal	&	34072.89	&	45	&	2	&	1	&	{\tiny AbortUser(259.58)}	&	50134.15	&	27	&	2		\\
tf-16-d-52-tr-191-sce-478	&			1	&	{\tiny AbortUser(1250.00)}	&	40094.30	&	34	&	2	&	1	&	Optimal	&	904.25	&	22	&	2		\\
tf-16-d-52-tr-196-sce-490	&			1	&	{\tiny AbortUser(1511.22)}	&	29891.91	&	47	&	2	&	1	&	{\tiny AbortUser(106.12)}	&	18903.94	&	9	&	2		\\
\hline																											
tf-16-d-54-tr-162-sce-405	&			1	&	Optimal	&	8679.24	&	14	&	2	&	1	&	Optimal	&	22029.86	&	41	&	2		\\
tf-16-d-54-tr-167-sce-418	&			1	&	Optimal	&	19568.24	&	23	&	2	&	1	&	Optimal	&	39065.80	&	43	&	2		\\
tf-16-d-54-tr-172-sce-430	&			1	&	Optimal	&	3471.10	&	43	&	2	&	1	&	Optimal	&	12945.98	&	28	&	3		\\
tf-16-d-54-tr-177-sce-443	&			1	&	Optimal	&	10138.81	&	21	&	2	&	1	&	Optimal	&	29548.30	&	6	&	2		\\
tf-16-d-54-tr-182-sce-455	&			1	&	{\tiny AbortUser(450.10)}	&	15202.66	&	25	&	2	&	1	&	Optimal	&	37336.13	&	29	&	2		\\
tf-16-d-54-tr-187-sce-468	&			1	&	{\tiny AbortUser(915.98)}	&	8625.98	&	21	&	2	&	1	&	{\tiny AbortUser(1058.55)}	&	31789.75	&	8	&	3		\\
tf-16-d-54-tr-192-sce-480	&			1	&	{\tiny AbortUser(1610.01)}	&	35787.75	&	40	&	2	&	1	&	{\tiny AbortUser(1147.26)}	&	45322.06	&	32	&	2		\\
tf-16-d-54-tr-197-sce-493	&			1	&	{\tiny AbortUser(1703.16)}	&	1035.94	&	54	&	2	&	1	&	{\tiny AbortUser(1253.78)}	&	46239.66	&	15	&	2		\\
\hline																											
tf-16-d-56-tr-168-sce-420	&			1	&	Optimal	&	5306.28	&	26	&	2	&	1	&	Optimal	&	8899.40	&	10	&	2		\\
tf-16-d-56-tr-173-sce-433	&			1	&	Optimal	&	6929.31	&	35	&	2	&	1	&	Optimal	&	29610.01	&	5	&	2		\\
tf-16-d-56-tr-178-sce-445	&			1	&	{\tiny AbortUser(982.26)}	&	17643.67	&	55	&	2	&	1	&	Optimal	&	6259.07	&	43	&	2		\\
tf-16-d-56-tr-183-sce-458	&			1	&	{\tiny AbortUser(586.14)}	&	11025.66	&	32	&	2	&	1	&	Optimal	&	40246.82	&	18	&	3		\\
tf-16-d-56-tr-188-sce-470	&			1	&	{\tiny AbortUser(658.26)}	&	32797.28	&	55	&	2	&	1	&	{\tiny AbortUser(1140.45)}	&	23771.74	&	30	&	2		\\
tf-16-d-56-tr-193-sce-483	&			1	&	{\tiny AbortUser(615.88)}	&	25706.69	&	36	&	2	&	1	&	{\tiny AbortUser(1852.20)}	&	53734.43	&	48	&	2		\\
tf-16-d-56-tr-198-sce-495	&			1	&	{\tiny AbortUser(825.20)}	&	1103.81	&	5	&	2	&	1	&	{\tiny AbortUser(1066.70)}	&	32597.82	&	38	&	2		\\
\hline																											
tf-16-d-58-tr-174-sce-435	&			1	&	{\tiny AbortUser(1682.57)}	&	15851.48	&	26	&	2	&	1	&	Optimal	&	4527.39	&	1	&	2		\\
tf-16-d-58-tr-179-sce-448	&			1	&	{\tiny AbortUser(1267.87)}	&	13050.69	&	53	&	2	&	1	&	Optimal	&	35781.98	&	12	&	2		\\
tf-16-d-58-tr-184-sce-460	&			1	&	{\tiny AbortUser(1398.22)}	&	2991.38	&	15	&	2	&	1	&	{\tiny AbortUser(1890.19)}	&	39438.16	&	44	&	2		\\
tf-16-d-58-tr-189-sce-473	&			1	&	{\tiny AbortUser(1036.55)}	&	18286.96	&	3	&	2	&	1	&	{\tiny AbortUser(2367.52)}	&	51285.98	&	30	&	3		\\
tf-16-d-58-tr-194-sce-485	&			1	&	{\tiny AbortUser(1448.20)}	&	9061.02	&	11	&	2	&	1	&	{\tiny AbortUser(695.25)}	&	46578.77	&	26	&	3		\\
tf-16-d-58-tr-199-sce-498	&			1	&	{\tiny AbortUser(1599.14)}	&	45260.86	&	7	&	2	&	1	&	{\tiny AbortUser(485.36)}	&	28978.38	&	12	&	2		\\
\hline																											
tf-16-d-60-tr-180-sce-450	&			1	&	{\tiny AbortUser(2501.88)}	&	43139.41	&	32	&	2	&	1	&	{\tiny AbortUser(785.45)}	&	53356.85	&	42	&	2		\\
tf-16-d-60-tr-185-sce-463	&			1	&	{\tiny AbortUser(4150.24}	&	31872.99	&	5	&	2	&	1	&	Optimal	&	9997.00	&	39	&	2		\\
tf-16-d-60-tr-190-sce-475	&			1	&	{\tiny AbortUser(2687.26)}	&	24513.45	&	32	&	2	&	1	&	{\tiny AbortUser(3659.87)}	&	37563.02	&	9	&	3		\\
tf-16-d-60-tr-195-sce-488	&			1	&	{\tiny AbortUser(1452.36)}	&	25809.94	&	14	&	2	&	1	&	{\tiny AbortUser(4460.42)}	&	47546.25	&	34	&	2		\\ 
     \end{longtable}
\end{tiny}
\end{landscape}

}


\section{Summary, conclusion and outlook to future works}\label{sec::conclusion}
Describing the processing time in terms of the resources deployed for carrying out a task (serving a truck), even if it would have been obvious, leads to unnecessary overhead and often some nonlinearity in the models, which in turn will complicate the solution process. In real cases, not all possible/alternative combinations of those resources are practical and they are sometimes even too unrealistic. Moreover, often different scenarios report processing times that are 'almost' the same from the experts' viewpoint, given the granularity of time units. These situations cause efficiency issues on the solution process and such modellings are less often suitable in decision support systems. Given that such a set of practical scenarios is very limited, instead of dealing with scenarios showing only some minor decimal points differences in a closed-form formulation, we opted to establish a set of scenarios based on expert knowledge.\

{}We then proposed a mathematical programming model for the dock assignment and truck scheduling problem. In this study, multiple resource deployment scenarios are considered for every truck in the system. The proposed model is tailored for conservative decision makers and considers different scenarios having different processing times. The proposed model is particularly designed to be exploited in a dual decomposition framework such as Dantzig-Wolfe and branch-and-price. While the compact model do not show very interesting computational behaviour if directly given to CPLEX, the proposed Dantzig-Wolfe reformulation and the branch-and-price approach equipped with a warm start initial column heuristic is shown to be very efficient in solving real-size instances of the problem in reasonable time. \

\black{}Here, due to some hardware limitations, we were only able to deal with a few (say three in this work) resources ---a sort of aggregation of resources of similar type. In reality, these resources can be disaggregated into a number of distinct groups (labour with different skills, vehicles of different types, etc.). We therefore require some even more efficient solution approaches to reduce the computation time and effort by several orders of magnitude, enabling us to integrate more features of the real practices. \

Further research directions include polyhedral analysis, identifying some classes of tightening facet-defining valid inequalities mainly to improve the quality of the proposed model and reduce the integrality gap, and having more efficient solution algorithms for solving the pricing problems. Alternative ad-hoc inspection-based solution approaches to the pricing problem may improve the overall efficiency. Normally, the calls to a cross-dock follow a statistical distribution and those generated instances follow more or less the same random distribution, so approaches based on machine learning and neural networks can be exploited to learn problem instances labelled by optimal solutions and produce some high quality feasible solutions.

%



\bibliographystyle{elsarticle-harv}
\bibliography{introBib1,petroleum,crossdock,MIP,vns,alns, simulation}

\end{document}